\DeclareMathOperator{\arcsinh}{arcsinh}
\newtheorem{proposition}{Proposition}
\newtheorem{theorem}{Theorem}
\newtheorem{example}{Example}
\newtheorem{remark}{Remark}
\newtheorem{lemma}{Lemma}
\newtheorem{corollary}{Corollary}
\newtheorem{definition}{Definition}
\DeclareMathOperator\arctanh{arctanh}
\title{Characterizations of Loxodromes on Rotational Surfaces in Euclidean 3--Space}
\author[F. Kahraman Aksoyak]{Ferda\u{g} Kahraman Aksoyak}
\address{K{\i}r\c{s}ehir Ahi Evran University,
Division of Elementary Mathematics Education,
40100, K{\i}r\c{s}ehir, Turkey}
\email{ferdag.aksoyak@ahievran.edu.tr}
\author[B. Bekta\c{s} Dem\.{i}rc\.i]{Burcu Bekta\c s Dem\.{i}rc\.i}
\address{Fatih Sultan Mehmet Vak{\i}f University, Hali\c{c} Campus, Faculty of Engineering,
Department of Software Engineering, 34445, Beyo\u{g}lu, istanbul, Turkey}
\email{bbektas@fsm.edu.tr}
\author[M. Babaarslan]{Murat Babaarslan}
\address{Yozgat Bozok University,
Department of Mathematics,
66100, Yozgat, Turkey}
\email{murat.babaarslan@bozok.edu.tr}
\subjclass[2010]{53A04, 53A05}
\keywords{Loxodrome, curvature, torsion, rotational surface, Gaussian curvature, mean curvature, minimal surfaces, helix, asymptotic curve, Euclidean 3--space.}
\date{\today}
\begin{document}

\parindent 0mm
\parskip 2mm

\maketitle

\begin{abstract}In this paper, we study on the characterizations of loxodromes on the rotational surfaces satisfying some special geometric properties such as having constant Gaussian curvature, flat and minimality in Euclidean 3-space. First, we give the parametrizations of loxodromes parametrized by arc-length parameter on any rotational surfaces in $\mathbb{E}^{3}$ and then, we calculate the curvature and the torsion of such loxodromes. 
Then, we give the parametrizations of loxodromes on rotational surfaces with constant Gaussian curvature. In particular, we prove that the loxodrome on the flat rotational surface is a general helix. Also, we investigate the loxodromes on the rotational surfaces with a constant ratio of principal curvatures (CRPC rotational surfaces). Moreover, we give the parametrizations of loxodromes on the minimal rotational surface which is a special case of CRPC rotational surfaces.  Then, we show that the loxodrome intersects the meridians of minimal rotational surface by the angle $\pi/{4}$ becomes an asymptotic curve. Finally, we give some visual examples to strengthen our main results via Wolfram Mathematica.
\end{abstract}

\section{Introduction} 
\label{Sec:1}

The geometry of curves and surfaces is not useful for mathematics but it also encompasses the techniques and ideas relevant to the studies in the engineering and the other fields of sciences. 

The curves such as loxodrome, logaritmic spiral, helix, geodesic curve and asymptotic curve in geometry are very popular in the engineering and the sciences. In 1537, a rhumb line (also called as a loxodrome) which makes a constant angle with all meridian of Earth’s surface was introduced by the Royal Cosmographer Pedro Nunes in the first time. Nunes made a clear distinction between the sailing along the shortest path (along a great-circle (geodesic)) and the sailing with a constant course along a rhumb line \cite{Leitao}. A great-circle course requires that changes continuously the bearing and it was an impossible task at first of ocean navigation. A navigator would usually approximate a great-circle by a series of rhumb lines \cite{Alexander}. The history of loxodromes goes back to the times when the voyagers first noticed that the Earth's surface is not flat and they had to take the curvature into account. In 1569, a major development  about loxodromes  was that Mercator projection of a loxodrome is a straight line \cite{Alexander}.

In 1905, Noble \cite{Noble} found the equations of loxodromes on rotational surface, spheroid and sphere in Euclidean 3-space. Also he showed that the stereographic projection of a loxodrome on sphere is a logarithmic spiral. The curvatures of loxodromes on rotational surfaces generated by the profile curve  $z=f(x)$ in $xz-$plane in Euclidean 3-space were computed by Kendall \cite{Kendall} in 1920. The arc-length of a loxodrome on a spheroid was computed by Petrovic \cite{Petrovic} in 2007. Also the arc-length of loxodrome on a sphere was given by Kos et. al. \cite{Kos} in 2009.

A curve whose tangent lines make a constant angle with a fixed direction is called as general helix. The helices on helicoid which is only ruled minimal surface are also geodesic curves. Similar as loxodromes, helices are important in navigation, too. For some interesting and magical applications of helices, we refer to \cite{Babaarslan, Deshmukha} and references therein.

A curve always tangents to an asymptotic direction of the surface is called as an asymptotic curve. These curves have been the subject of many studies in astronomy and in astrophysics (see \cite{Khalifa}).

The theory of surfaces is another useful and interesting field in the geometry. A minimal surface is a surface whose mean curvature is zero.  The first explicit examples of minimal surfaces are the plane, the catenoid (Euler, 1741) and the helicoid (Meusnier, 1776) (see \cite{Perez}). Minimal surfaces are used in many areas such as architecture, material science, aviation, ship manufacture, biology and so on (see \cite{Xu}). A Weingarten surface is a surface satisfying a relation between the principal curvatures. Also, CRPC surfaces as a special case of Weingarten surfaces are a natural generalization of minimal surfaces, too (see \cite{Wang}).

Although there are many studies about loxodromes on rotational surfaces, there is no enough results about the curvatures of loxodromes and the relations between loxodromes and geometric properties of surfaces. Therefore, in this article, we compute the curvatures and parametrizations of loxodromes for some special rotational surfaces such as flat, constant Gaussian curvature, Weingarten and minimal in Euclidean 3-space. Also, we investigate some important relationships between loxodromes, helices and asymptotic curves. Finally, we give some examples by using Wolfram Mathematica.

\section{Preliminaries}
\label{Sec:2}

In this section, we give some basic definitions and formulas related to curves and surfaces in Euclidean 3-space $\mathbb{E}^{3}$. For more details, we refer to \cite{O'Neill} and \cite{Kuhnel}.

The Euclidean inner product $\left\langle \cdot,\cdot\right\rangle$ in $\mathbb{E}^{3}$ allows us to define the length of vectors by the norm:
\begin{eqnarray}
\left\|x\right\| =\sqrt{\left\langle x,x\right\rangle}
\end{eqnarray}
as well as introducing the angle $\psi\in [0,\pi]$ between two vectors $x, y \neq 0$ by
\begin{eqnarray}
\label{angle}
\cos \psi = \frac{\langle x,y \rangle}{\left\|x\right \|  \left\|y\right\|}.
\end{eqnarray}

A regular parametrized curve is a continuously differentiable immersion $\alpha:I\rightarrow \mathbb{E}^3$, that is $\dot{\alpha}(t)\neq0$ holds everywhere, where $I\subset \mathbb{R}$ is a real interval.

The arc-length parameter of a curve $\alpha$ is introduced by
\begin{eqnarray}
s(t)=\int _{t_{0} }^{t}\left\|\dot{\alpha}(t)\right\|dt.
\end{eqnarray}
Also, $\alpha$ is a unit speed curve if $\|\alpha'(s)\|=1$ for all $s\in I\subset\mathbb{R}$.

Then, Frenet--Serret vectors of $\alpha$ are given by
\begin{displaymath}
\begin{array}{ccc}
T(t) & = & \frac{\dot{\alpha}(t)}{\left\|\dot{\alpha}(t)\right\|},\\\\
N(t) & = & B(t)\times T(t),\\\\
B(t) & = & \frac{\dot{\alpha}(t) \times \ddot{\alpha}(t)}{\left\| \dot{\alpha}(t) \times \ddot{\alpha}(t)\right\|},
\end{array}
\end{displaymath}

and the curvature and torsion of $\alpha$ is given by
\begin{eqnarray}
\label{curvature}
\begin{array}{ccc}
\kappa(t) & = & \frac{\left\| \dot{\alpha}(t) \times \ddot{\alpha}(t)\right\|}{\left\|\dot{\alpha}(t)\right\|^3},\\\\
\tau(t) & = & \frac{\det\left(\dot{\alpha}(t),\ddot{\alpha}(t),\dddot{\alpha}(t)\right)}{\left\| \dot{\alpha}(t) \times \ddot{\alpha}(t)\right\|^2}.
\end{array}
\end{eqnarray}

A unit speed curve $\alpha$ is called a general helix if the unit tangent vector makes a constant angle with a fixed unit vector, that is, $\left\langle T(s),u\right\rangle=\cos \gamma$ for some fixed angle $\gamma$. A space curve $\alpha$ is a general helix if and only if the ratio of $\kappa$ and $\tau$ is a constant. When both $\kappa$ and $\tau$ are constants, $\alpha$ is called a circular helix (see \cite{Deshmukha}).

A parametrized surface element (parametrization) is an immersion $\Phi:U\rightarrow \mathbb{E}^3$, where $U\subset \mathbb{R}^2$ is an open subset.

If $p$ is a point of orientable surface $M$, then for each tangent vector $v$ to $M$ at $p$, the shape operator of $M$ at $p$ is defined by
\begin{eqnarray}
\label{shapeop}
S_{p}(v)=-D_{v}N
\end{eqnarray}
where $D$ is covariant derivative and $N$ is a unit normal vector field on neighborhood of $p$ in $M$.

The first and second fundamental forms of $M$ are defined
\begin{eqnarray}
I(v,w)=\left\langle v,w\right\rangle, \, II(v,w)=\left\langle S(v),w\right\rangle
\end{eqnarray}
for all pairs of tangent vectors to $M$.

Let $v$ tangent vector to $M$ at $p$. Then $\kappa_{n}(v)=II(v,v)$ is called as the normal curvature of $M$ in the direction $v$. If the normal curvature of $v$ is zero then, $v$ is called as an asymptotic direction. A regular curve $\alpha:I\rightarrow M$ is an asymptotic curve provided its velocity vectors $\dot{\alpha}$ are asymptotic directions, that is, $\kappa_{n}(\dot{\alpha})=II(\dot{\alpha},\dot{\alpha})=0$.

For principal curvatures $\kappa_{1}, \kappa_{2}$ of $M$, the Gaussian curvature and mean curvature of $M$ are defined by
\begin{eqnarray}
K=\det(S)=\kappa_{1} \cdot \kappa_{2}
\end{eqnarray}
and 
\begin{eqnarray}
H=\frac{1}{2} \mathrm{tr}(S)=\frac{1}{2}(\kappa_{1}+\kappa_{2}),
\end{eqnarray}
respectively. It is well known that if $K=0$, then the surface is flat and if $H=0$, then the surface is minimal.

Let $R$ be a rotational surface in $\mathbb{E}^3$ given by
\begin{equation}
\label{rotsurf}
\Phi(s,\theta)=(f(s)\cos\theta, f(s)\sin\theta, g(s))
\end{equation}
whose the profile curve $\beta(s)=(f(s), g(s), 0)$ has an arc-length
parametrization, that is, $f'^2(s)+g'^2(s)=1$.
The unit normal vector of the rotational surface is%
\begin{equation}
\label{normal}
N(s,\theta )=\left( -g'(s)\cos \theta ,-g'(s)\sin \theta
,f'(s)\right).
\end{equation}
Using the equation  $f'^2(s)+g'^2(s)=1$, 
the principal curvatures of $R$ are calculated as
\begin{eqnarray}
\label{kappa_{1}}
\kappa_{1} & = & \frac{-f''}{\sqrt{1-f'^2}}, \\
\label{kappa_{2}}
\kappa_{2} & = & \frac{\sqrt{1-f'^2}}{f}.
\end{eqnarray}
Thus, the Gaussian and mean curvatures of $R$ in $\mathbb{E}^3$ 
are given by
\begin{align}
\label{Gauss}
K & =  -\frac{f''}{f}, \\
\label{mean}
H & = \frac{-f f^{\prime \prime }-f'^2+1}{2f\sqrt{1-f'^2}}.    
\end{align}
\begin{theorem}
\label{classminimal}
\cite{Pressley}
Any minimal rotational surface is either part of a plane or part of a
catenoid. If the rotational surface given by \eqref{rotsurf} is minimal, then the components of profile curve are obtained by%
\begin{equation}
\label{minimal1}
f(s)=\frac{1}{n}\sqrt{1+n^{2}\left( s+m\right) ^{2}}
\end{equation}%
and%
\begin{equation}
\label{minimal2}
g(s)=\pm \frac{1}{n}\arcsinh\left( n\left( s+m\right) \right) +r
\end{equation}%
where $r$ is a constant. Thus, we have 
$\displaystyle{f=\frac{1}{n}\cosh \left( n\left( g-r\right) \right)}$.
\end{theorem}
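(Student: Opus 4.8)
The plan is to impose the minimality condition $H=0$ directly on the mean curvature formula \eqref{mean} and to solve the resulting ordinary differential equation for the profile function $f$. Since the denominator $2f\sqrt{1-f'^2}$ is nonzero on a regular surface, $H=0$ is equivalent to the vanishing of the numerator, i.e.
\begin{equation*}
f f'' + f'^2 - 1 = 0.
\end{equation*}
The key observation is that the left-hand side is an exact derivative, $f f'' + f'^2 = (f f')'$, so the equation reads $(f f')' = 1$. Integrating once gives $f f' = s + m$ for a constant $m$, and integrating again (using $f f' = \tfrac12 (f^2)'$) yields $f^2 = (s+m)^2 + C$ for a second constant $C$.

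First I would carry out a case analysis on the sign of $C$. Differentiating $f^2 = (s+m)^2 + C$ gives $f' = (s+m)/f$, hence $1 - f'^2 = C/f^2$. The arc-length constraint $f'^2 + g'^2 = 1$ forces $g'^2 = 1 - f'^2 = C/f^2 \ge 0$, so necessarily $C \ge 0$. If $C=0$, then $f'^2=1$ and $g'=0$, so $g$ is constant and $R$ is part of a plane. If $C>0$, I would write $C = 1/n^2$; then $f^2 = (s+m)^2 + 1/n^2 = \tfrac{1}{n^2}\bigl(1 + n^2(s+m)^2\bigr)$, which gives \eqref{minimal1} after taking the positive square root. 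This is the catenoid case, and together the two subcases reproduce the stated dichotomy.

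Next I would recover $g$ in the catenoid case from the constraint. Substituting \eqref{minimal1} gives $g'^2 = 1 - f'^2 = \dfrac{1}{1 + n^2(s+m)^2}$, so $g' = \pm \bigl(1 + n^2(s+m)^2\bigr)^{-1/2}$. Integrating with the substitution $u = n(s+m)$ and the standard antiderivative $\int (1+u^2)^{-1/2}\,du = \arcsinh u$ produces \eqref{minimal2}, with $r$ the constant of integration. Finally, to obtain the closed relation $f = \tfrac1n \cosh(n(g-r))$, I would eliminate $s$ between \eqref{minimal1} and \eqref{minimal2}: from \eqref{minimal2} we have $n(g-r) = \pm \arcsinh(n(s+m))$, hence $n(s+m) = \pm \sinh(n(g-r))$, and the identity $\cosh^2 - \sinh^2 = 1$ gives $\sqrt{1 + n^2(s+m)^2} = \cosh(n(g-r))$; substituting into \eqref{minimal1} finishes the proof.

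The only genuinely delicate point is the sign and case bookkeeping: recognizing that the integration constant $C$ must be nonnegative (which rules out a spurious imaginary branch incompatible with the arc-length parametrization) and that $C=0$ degenerates to the plane rather than the catenoid. The integrations themselves are routine once the exact-derivative structure $(ff')'=1$ is spotted, so I expect no serious obstacle beyond keeping track of the $\pm$ signs and verifying that the positive root of $f^2$ is the admissible one for a profile curve.
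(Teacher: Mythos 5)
Your argument is correct and is essentially the standard derivation that the paper relies on by citing Pressley: the paper gives no proof of its own, and the route via $(ff')'=1$, the integration constant $C\ge 0$ forced by $g'^2=1-f'^2$, and the dichotomy $C=0$ (plane) versus $C=1/n^2>0$ (catenoid) is exactly the textbook argument behind \eqref{minimal1} and \eqref{minimal2}. Nothing further is needed.
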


For the rotational surface $R$ with a constant Gaussian curvature $K_0$, the equation \eqref{Gauss} gives the differential equation $f''(s)=-K_0f(s)$. The solution of this differential equation according the choice of $K_0$ 
is given in \cite{Kuhnel} as follows:
\begin{theorem}
\label{classGauss}
\cite{Kuhnel}
Let $R$ be a rotational surface in $\mathbb{E}^3$ with constant Gaussian curvature $K_0$ given by \eqref{rotsurf}. Then, 
the function $f(s)$ is one of the followings:
\begin{itemize}
    \item [(i.)] for $K_{0}>0$,
    \begin{equation}
        \label{posGaus}
        f(s)=A\cos{(\sqrt{K_{0}}s)}+B\sin{(\sqrt{K_{0}}s)},
    \end{equation}
    
    \item [(ii.)] for $K_{0}=0$,
     \begin{equation}
        \label{zeroGaus}
       f(s)=As+B,\;\;\;|A|\leq 1,
    \end{equation}
    
    \item [(iii.)] for $K_{0}<0$,
    \begin{equation}
        \label{negGaus}
     f(s)=A\cosh{(\sqrt{-K_{0}}s)}+B\sinh{(\sqrt{-K_{0}}s)}
    \end{equation}
\end{itemize}
where $A$ and $B$ are constants. 
\end{theorem}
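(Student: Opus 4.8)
The plan is to reduce the geometric hypothesis to a single ordinary differential equation and then solve that equation case by case according to the sign of the curvature constant.

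First I would start from the expression for the Gaussian curvature of the rotational surface already derived in \eqref{Gauss}, namely $K=-f''/f$. Imposing that this equals a fixed constant $K_0$ and clearing the denominator yields the linear second-order equation with constant coefficients $f''+K_0 f=0$, exactly as announced in the paragraph preceding the statement. Everything then reduces to writing down the general solution of this equation, so the bulk of the argument is a routine ODE computation.

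Second I would solve $f''+K_0 f=0$ through its characteristic equation $r^2+K_0=0$, whose roots are $r=\pm\sqrt{-K_0}$, and split into three cases. When $K_0>0$ the roots form the conjugate imaginary pair $\pm i\sqrt{K_0}$, so the real general solution is the combination of $\cos(\sqrt{K_0}\,s)$ and $\sin(\sqrt{K_0}\,s)$ of \eqref{posGaus}. When $K_0<0$ the roots are the distinct reals $\pm\sqrt{-K_0}$, which produces the hyperbolic combination of \eqref{negGaus}. When $K_0=0$ the equation degenerates to $f''=0$, whose general solution is the affine function $f(s)=As+B$ of \eqref{zeroGaus}. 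In each case $A$ and $B$ are the two arbitrary constants of integration.

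Third, and this is the only genuinely geometric step rather than a routine calculation, I would justify the side condition $|A|\le 1$ attached to the flat case. The hypothesis that the profile curve $\beta(s)=(f(s),g(s),0)$ is arc-length parametrized forces $f'^2(s)+g'^2(s)=1$, hence $f'^2(s)\le 1$ for all $s$ so that the complementary function $g$ remains real. In the flat case $f'=A$ is constant, so this pointwise bound becomes the global restriction $|A|\le 1$, which is precisely what guarantees that one obtains an admissible surface rather than merely a formal solution of the ODE. Since the statement is a classical classification quoted from \cite{Kuhnel}, I do not expect any real obstacle: the entire content is the translation of the phrase \emph{constant Gaussian curvature} into the equation $f''+K_0 f=0$ together with its elementary solution, the one subtle point being the unit-speed normalization that yields the constraint $|A|\le 1$.
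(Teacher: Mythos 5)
Your proposal is correct and follows essentially the same route as the paper, which reduces the hypothesis via \eqref{Gauss} to the linear equation $f''(s)=-K_0f(s)$ and then cites \cite{Kuhnel} for the case-by-case solution. Your additional remark deriving $|A|\le 1$ from the unit-speed condition $f'^2+g'^2=1$ on the profile curve is a correct filling-in of a detail the paper leaves implicit.
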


Throughout the article, we denote the derivatives with respect to $s$ and $t$ by " $'$ " and " $\dot{}$ ", respectively.

\section{The Curvatures of Loxodromes on Rotational Surfaces }
\label{Sec:3}
In this section, we recall the definition and the parametrization of a loxodrome on a rotational surface in Euclidean 3--space. 
Then, we compute the curvature and torsion of such loxodromes.  

\begin{definition}
A loxodrome is a special curve on a rotational surface which meets the meridians at a constant angle.
\end{definition}

In \cite{Noble}, C. A. Noble found the explicit parametrization of any loxodrome on any rotational surface of $\mathbb{E}^3$.
On the other hand, we study the loxodrome on the rotational surface in $\mathbb{E}^3$ in case that the loxodrome and the profile curve of the rotational surface are parametrized by arc--length parameter.
Thus, we give the following statement about the parametrization of such loxodromes with its proof.  

\begin{theorem}
\label{thmloxodrome}
Let $R$ be a rotational surface in $\mathbb{E}^3$ given by \eqref{rotsurf}. Then, a parametrization of a loxodrome on $R$ is given by 
\begin{equation}
\label{lox}
    \alpha(t)=(f(s(t))\cos{\theta(t)}, f(s(t))\sin{\theta(t)}, g(s(t)))
\end{equation}
where $s(t)=(\cos{\psi})t+c$ and 
$\displaystyle{\theta(t)=\varepsilon\sin{\psi}\int^t_{t_0}
\frac{d\xi}{f(s(\xi))}}$
for constants $c, t_0$, $\varepsilon=\pm 1$ and $\psi\in [0,\pi]$.
\end{theorem}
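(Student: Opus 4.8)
The plan is to realize the loxodrome as a curve lying in the surface, written in surface coordinates as $\alpha(t)=\Phi(s(t),\theta(t))$ for two unknown scalar functions $s(t)$ and $\theta(t)$, and then to impose the two defining requirements: that $t$ be an arc--length parameter for $\alpha$, and that $\alpha$ meet every meridian at the fixed angle $\psi$. Differentiating by the chain rule gives $\dot\alpha=\Phi_s\,\dot s+\Phi_\theta\,\dot\theta$, so first I would compute the coordinate vector fields $\Phi_s=(f'\cos\theta,f'\sin\theta,g')$ and $\Phi_\theta=(-f\sin\theta,f\cos\theta,0)$. Using the arc--length normalization $f'^2+g'^2=1$ of the profile curve, these satisfy $\langle\Phi_s,\Phi_s\rangle=1$, $\langle\Phi_s,\Phi_\theta\rangle=0$ and $\langle\Phi_\theta,\Phi_\theta\rangle=f^2$; that is, the parameter lines are orthogonal with first fundamental form coefficients $E=1$, $F=0$, $G=f^2$. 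This orthogonality is precisely what makes the later angle computation collapse.

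Next I would translate the two conditions into equations for $\dot s$ and $\dot\theta$. The meridians are the curves $\theta=\mathrm{const}$, whose unit tangent direction is $\Phi_s$, so by \eqref{angle} the angle $\psi$ between $\dot\alpha$ and the meridian is governed by $\cos\psi=\langle\dot\alpha,\Phi_s\rangle/\|\dot\alpha\|$. Because $F=0$, the numerator reduces to $\langle\dot\alpha,\Phi_s\rangle=\dot s$. The arc--length condition reads $\|\dot\alpha\|^2=\dot s^2+f^2\dot\theta^2=1$, so $\|\dot\alpha\|=1$ and the angle condition becomes the clean identity $\cos\psi=\dot s$. Hence $\dot s$ is the constant $\cos\psi$, and integrating yields $s(t)=(\cos\psi)t+c$ for a constant $c$.

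With $\dot s$ determined, I would substitute back into $\dot s^2+f^2\dot\theta^2=1$ to obtain $f^2\dot\theta^2=1-\cos^2\psi=\sin^2\psi$, so $f\,\dot\theta=\varepsilon\sin\psi$ with $\varepsilon=\pm1$ recording the orientation (the sign does not affect the angle). Solving for $\dot\theta=\varepsilon\sin\psi/f(s(t))$ and integrating from $t_0$ produces $\theta(t)=\varepsilon\sin\psi\int_{t_0}^t d\xi/f(s(\xi))$, which is exactly the claimed parametrization.

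I do not expect a genuine obstacle here: once the curve is written in coordinates, the computation is short and elementary. The only points requiring care are the modeling step — correctly encoding ``constant angle with the meridians'' and identifying $\Phi_s$ as the meridian tangent — and the bookkeeping of signs, where $\varepsilon=\pm1$ absorbs the choice of rotational direction and $\cos\psi$ is allowed to be negative since $\psi\in[0,\pi]$. I would also emphasize that the arc--length hypothesis is essential: it is what decouples the angle condition into $\dot s=\cos\psi$ and simultaneously pins down the parameter $t$, so that both $s(t)$ and $\theta(t)$ emerge by a single integration.
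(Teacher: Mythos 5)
Your proposal is correct and follows essentially the same route as the paper's proof: both impose the unit-speed condition $\dot s^2+f^2\dot\theta^2=1$ and the constant-angle condition $\cos\psi=\langle\dot\alpha,\Phi_s\rangle=\dot s$, then integrate. Your phrasing via the first fundamental form coefficients $E=1$, $F=0$, $G=f^2$ is merely a tidier packaging of the same computation the paper carries out explicitly in coordinates.
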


\begin{proof}
Assume that $R$ is a rotational surface in $\mathbb{E}^3$ given by \eqref{rotsurf}. Any meridian of $R$, 
that is a curve $\theta=\theta_0$ for a constant $\theta_0$, 
is given by 
$\gamma_\theta(s)= (f(s)\cos{\theta}, f(s)\sin{\theta}, g(s))$. 
Taking the derivative of $\gamma_\theta$ with respect to $s$, 
we get 
\begin{equation}
\label{meridiander}
 \gamma'_\theta(s)= (f'(s)\cos{\theta}, f'(s)\sin{\theta}, g'(s)).
 \end{equation}
Since the profile curve of $R$ has arc--length parameter $s$,
$||\gamma'_\theta||=1$. 
On the other side, we can express the general form of a loxodrome $\alpha:I\subset\mathbb{R}\rightarrow R\subset\mathbb{E}^3$ as follows:
\begin{equation}
\label{loxodrome}
    \alpha(t)=(f(s(t))\cos{\theta(t)}, f(s(t))\sin{\theta(t)}, g(s(t))).
\end{equation}
By a direct calculation, we get 
\begin{equation}
\label{loxoderv}
    \dot{\alpha}(t)
    =
    (f'(s(t))\dot{s}\cos{\theta(t)}
    -f(s(t))\sin{\theta(t)}\dot{\theta}(t), 
    f'(s(t))\dot{s}\sin{\theta(t)}
    +f(s(t))\cos{\theta(t)}\dot{\theta}(t), g'(s(t))\dot{s}).
\end{equation}
Since $\alpha$ is parametrized by arc--length parameter $t$, 
that is $||\dot{\alpha}||=1$, we get 
the following equation
\begin{equation}
\label{loxarc}
\dot{s}^2(t)+f^2(s(t))\dot{\theta}^2(t)=1.
\end{equation}
Using the equations \eqref{meridiander} and \eqref{loxoderv}  in \eqref{angle}, we get
\begin{equation}
    \cos{\psi}=\langle\dot{\alpha}(t),\gamma'_\theta(s)\rangle
    =\dot{s}(t)
\end{equation}
for a constant $\psi\in [0,\pi]$.
Thus, we find $s(t)=(\cos{\psi})t+c$ for any constant $c$. 
Substituting $\dot{s}(t)$ in the equation \eqref{loxarc}, we get 
the function $\theta(t)$ as given. 
\end{proof}
Then, we calculate the curvature and torsion of the loxodrome $\alpha$ on $R$ in $\mathbb{E}^3$ parametrized by \eqref{lox}.

\begin{proposition}
Let $R$ be a rotational surface in $\mathbb{E}^3$ and 
$\alpha$ be a loxodrome on $R$ given by \eqref{rotsurf} and \eqref{lox}, respectively. The curvature and the torsion of the loxodrome $\alpha$ are given by
\begin{align}
\label{kaplox} 
\kappa&= \sqrt{\frac{a^4f''^2}{1-f'^2}
+\frac{b^4+a^2b^2f'^2}{f^2}-\frac{2a^2b^2f''}{f}},\\
\label{torlox}
\tau=-\frac{ab}{\kappa^2f^3(1-f'^2)^{3/2}}
&(-b^2(1-f'^2)^2(b^2+a^2f'^2)
+a^2ff''(1-f'^2)(3b^2+(a^2-2b^2)f'^2)\notag\\
&+a^2f^2f''^2(-2a^2+b^2+3a^2f'^2)
-a^4f^3f''^3
+a^2f^2f'f'''(1-f'^2))
\end{align}
where $f$ denotes a smooth function $f(s(t))$, 
$a=\cos{\psi}$ and $b=\varepsilon\sin{\psi}$ for constants $\psi\in [0,\pi]$ and $\varepsilon=\pm 1$. 
\end{proposition}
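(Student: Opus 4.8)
The plan is to compute $\kappa$ and $\tau$ directly from the coordinate formulas \eqref{curvature}, since $\alpha$ is already known to be unit--speed by Theorem~\ref{thmloxodrome}. Because $\|\dot\alpha\|=1$, the curvature reduces to $\kappa = \|\ddot\alpha\|$ and the torsion to $\tau = \det(\dot\alpha,\ddot\alpha,\dddot\alpha)/\kappa^2$, so the whole computation comes down to differentiating \eqref{loxoderv} and organizing the resulting vectors. Throughout I would keep the abbreviations $a=\cos\psi$ and $b=\varepsilon\sin\psi$ so that $\dot s = a$ (constant) and, from \eqref{loxarc}, $\dot\theta = b/f$; these are the two identities that make every higher derivative manageable. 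Note that $a^2+b^2=1$, which lets one simplify $\sin^2\psi$ and $\cos^2\psi$ factors at the end.

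First I would differentiate $\dot\theta = b/f$ repeatedly using the chain rule $\frac{d}{dt}=a\frac{d}{ds}$: one gets $\ddot\theta = -ab f'/f^2$ and an analogous third--order expression, while $\ddot s = 0$ and $\dddot s = 0$ because $\dot s$ is constant. Substituting these into the derivatives of \eqref{loxoderv}, the vector $\ddot\alpha$ splits naturally into a radial part (coefficients of $(\cos\theta,\sin\theta,0)$ and $(0,0,1)$) and a tangential--angular part (coefficients of $(-\sin\theta,\cos\theta,0)$). A convenient device is to work in the moving orthonormal frame $e_r=(\cos\theta,\sin\theta,0)$, $e_\theta=(-\sin\theta,\cos\theta,0)$, $e_z=(0,0,1)$, whose $t$--derivatives are $\dot e_r=\dot\theta\, e_\theta$ and $\dot e_\theta=-\dot\theta\, e_r$; expressing $\alpha$, $\dot\alpha$, $\ddot\alpha$, $\dddot\alpha$ in this frame keeps the bookkeeping linear and avoids repeatedly expanding trigonometric products.

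Once $\ddot\alpha$ is in hand, $\kappa^2=\langle\ddot\alpha,\ddot\alpha\rangle$ is a sum of squares of the frame components; collecting terms and using $a^2+b^2=1$ together with $1-f'^2$ wherever $g'^2$ appears (from $f'^2+g'^2=1$) should produce exactly the three grouped terms of \eqref{kaplox}. For the torsion I would form the $3\times 3$ determinant $\det(\dot\alpha,\ddot\alpha,\dddot\alpha)$ in the $\{e_r,e_\theta,e_z\}$ frame, which is a genuine determinant there since the frame is orthonormal and positively oriented; expanding it and dividing by $\kappa^2$ gives \eqref{torlox}.

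The main obstacle will be the torsion computation: the third derivative $\dddot\alpha$ introduces $f'''$ and products of $f$, $f'$, $f''$ of total degree high enough that the determinant has many terms before cancellation, and matching the final polynomial in the parenthesis of \eqref{torlox} requires careful and repeated use of the single relation $g'^2=1-f'^2$ (and its derivative $g'g''=-f'f''$) to eliminate every occurrence of $g$ and its derivatives in favor of $f$. I expect that the factor $ab/(f^3(1-f'^2)^{3/2})$ pulls out cleanly once $g'$ is rewritten as $\sqrt{1-f'^2}$, leaving a polynomial identity in $f',f'',f'''$ that one verifies term by term; this final consolidation, rather than any conceptual difficulty, is where the bulk of the work lies.
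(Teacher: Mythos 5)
Your proposal is correct and follows essentially the same route as the paper: the authors likewise compute $\dot\alpha$, $\ddot\alpha$, $\dddot\alpha$ directly (with $\dot s=a$, $\dot\theta=b/f$), eliminate $g$ via $g'=\pm\sqrt{1-f'^2}$ and its derivatives, and use the unit-speed simplifications $\kappa=\|\dot\alpha\times\ddot\alpha\|=\|\ddot\alpha\|$ and $\tau=\det(\dot\alpha,\ddot\alpha,\dddot\alpha)/\kappa^2$. Your cylindrical frame $\{e_r,e_\theta,e_z\}$ is only a bookkeeping convenience (the paper carries the $\cos\theta$, $\sin\theta$ factors explicitly), not a different argument.
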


\begin{proof}
Assume that $\alpha(t)$ is a loxodrome on $R$ in $\mathbb{E}^3$ 
given by \eqref{lox}. For a simplicity, we define $a=\cos\psi$ and $b=\varepsilon\sin\psi$. 
Then, we calculate the first and second derivatives of $\alpha(t)$ as follows:
\begin{align}
\label{firstder}
    \dot{\alpha}(t)&=(af'\cos{\theta}-b\sin{\theta}, af'\sin{\theta}+b\cos{\theta}, ag'),\\
\label{secder}    
    \ddot{\alpha}(t)&=
    \left(\left(-\frac{b^2}{f}+a^2f''\right)\cos{\theta}
    -\frac{abf'}{f}\sin{\theta}, 
    \left(-\frac{b^2}{f}+a^2f''\right)\sin{\theta}
    +\frac{abf'}{f}\cos{\theta}, a^2g''\right).
\end{align}
From $g'(s)=\pm\sqrt{1-f'^2(s)}$, 
we have $g''(s)=\mp\frac{f'(s)f''(s)}{\sqrt{1-f'^2(s)}}$.
Using these equations in the first equation of \eqref{curvature}, 
we get $\kappa$ as the equation 
\eqref{kaplox}. 
Also, from the equation \eqref{secder}, we compute 
\begin{align}
\label{thirder}
\dddot{\alpha}(t)=
   \Big(\frac{b}{f^2}(b^2+a^2f'^2-2ff'')\sin{\theta}
    +a^3f'''\cos{\theta},
    -\frac{b}{f^2}(b^2+a^2f'^2-2ff'')\cos{\theta}
    &+a^3f'''\sin{\theta},\notag\\
    &a^3g'''\Big).
\end{align}
Since $\alpha(t)$ has arc-length parametrization, 
the curvature of $\alpha$ is also given by $\kappa=||\dot{\alpha}\times\ddot{\alpha}||$. 
Then, 
$\tau=\frac{\mbox{det}(\dot{\alpha},\ddot{\alpha},\dddot{\alpha})}
{\kappa^2}$.
Moreover, we have 
$g'''(s)=\mp\frac{f'(s)f'''(s)(1-f'^2(s))+f''^2(s)}
{(1-f'^2(s))^{3/2}}$.
By a direct calculation, using the equations \eqref{firstder}, \eqref{secder} and \eqref{thirder} in the second equation of \eqref{curvature}, we find the torsion $\tau$ as the equation \eqref{torlox}.
\end{proof}

Now, we examine some special values for the angle $\psi$ 
to make characterization for the loxodrome $\alpha$ on the rotational surface $R$ in $\mathbb{E}^3$. 

\textbf{Case 1.}
For $\psi=0$, that is, $a=1$ and $b=0$. In this case,
the loxodrome coincides the meridian curve of 
the rotational surface $R$ in $\mathbb{E}^3$ and
the equations \eqref{kaplox} and \eqref{torlox} 
also give $\kappa=\pm\frac{f''}{\sqrt{1-f'^2}}$ and $\tau=0$ 
which are the curvature and torsion of the meridian curve $\gamma_\theta$. It is well known that all meridian curves are geodesics on the rotational surface in $\mathbb{E}^3$. 
Thus, the loxodrome becomes a geodesic of the rotational surface (see \cite{O'Neill}).

\textbf{Case 2.}
For $\psi=\frac{\pi}{2}$, that is, $a=0$ and $b=\varepsilon$,
the equations \eqref{kaplox} and \eqref{torlox} 
implies $\kappa=\pm\frac{1}{f(c)}$ and $\tau=0$
for a constant $c$. 
Then, the loxodrome becomes a parallel 
curve of the rotational surface $R$ in $\mathbb{E}^3$, 
that is, it is a circle.

\textbf{Case 3.}
For $\psi=\frac{\pi}{4}$, that is, $a^2=b^2$. The equations \eqref{kaplox} and \eqref{torlox} give the curvature and torsion of $\alpha$ as follows:
 \begin{align}
     \label{kaploxeq}
 \kappa&= \frac{1}{2}\sqrt{\frac{f''^2}{1-f'^2}
+\frac{1+f'^2}{f^2}-\frac{2f''}{f}},\\
    \label{torloxeq}
\tau=\frac{\varepsilon}{8\kappa^2f^3(1-f'^2)^{3/2}}
&((1-f'^2)^2(1+f'^2)
-ff''(1-f'^2)(3-f'^2)+f^2f''^2(1-3f'^2)\notag\\
&+f^3f''^3-2f^2f'f'''(1-f'^2))
\end{align}
where 
$f=f(s(t))$ is a smooth function for $s(t)=\frac{1}{\sqrt{2}}t+c$. 

\section{The characterizations of Loxodromes on Rotational Surfaces}
\label{Sec:4}
In this section, we investigate the loxodromes on rotational surfaces in $\mathbb{E}^3$ putting some conditions for rotational surface. First, we examine the loxodrome $\alpha$ on the rotational surface $R$ in $\mathbb{E}^3$ with constant Gaussian curvature $K_0$. 
\begin{lemma}
Let $R$ be a rotational surface in $\mathbb{E}^3$ with a constant Gaussian curvature $K_0$ and $\alpha$ be a loxodrome on $R$ given by \eqref{rotsurf} and \eqref{lox}, respectively. Then,
the curvature and the torsion of $\alpha$ are given by
\begin{align}
     \label{kaploxcg}
 \kappa= \sqrt{2a^2b^2K_0+\frac{a^4K_0^2f^2}{1-f'^2}
+\frac{b^4+a^2b^2f'^2}{f^2}},
\end{align}
\begin{align}
    \label{torloxcg}
\tau=-\frac{ab}{\kappa^2f^3(1-f'^2)^{3/2}}
&(a^4K_0^3f^6-b^2(1-f'^2)^2(b^2+a^2f'^2)
+a^2K_0^2f^4(-2a^2+b^2+3a^2f'^2)\notag\\
&-a^2K_0f^2(1-f'^2)(3b^2+(1+a^2-2b^2)f'^2))
\end{align}
where $K_0$ is an arbitrary constant.
\end{lemma}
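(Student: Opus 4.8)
The plan is to reduce everything to a single substitution into the general curvature and torsion formulas \eqref{kaplox} and \eqref{torlox} that were established in the preceding proposition. The crucial observation is that the hypothesis $K\equiv K_0$ together with the expression \eqref{Gauss} for the Gaussian curvature of a rotational surface, $K=-f''/f$, immediately yields the relation $f''=-K_0 f$. This is the only structural input I need, and differentiating it once more with respect to $s$ gives $f'''=-K_0 f'$. These two identities let me eliminate every occurrence of $f''$ and $f'''$ from the general formulas in favor of $f$, $f'$, and the constant $K_0$.

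First I would treat the curvature. Starting from \eqref{kaplox}, I replace $f''$ by $-K_0 f$. The term $a^4 f''^2/(1-f'^2)$ becomes $a^4 K_0^2 f^2/(1-f'^2)$, and the term $-2a^2 b^2 f''/f$ becomes $+2a^2 b^2 K_0$; the remaining summand $(b^4+a^2b^2 f'^2)/f^2$ is untouched. Collecting these three pieces under the radical produces exactly \eqref{kaploxcg}, so the curvature computation is essentially one line.

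The torsion is where the real bookkeeping lives, and I expect it to be the main obstacle, though it is purely mechanical. I would substitute $f''=-K_0 f$ and $f'''=-K_0 f'$ term by term into the bracketed expression of \eqref{torlox}. The monomials in $f''$ and $f'''$ convert as follows: $-a^4 f^3 f''^3\mapsto a^4 K_0^3 f^6$, the quadratic $a^2 f^2 f''^2(\cdots)\mapsto a^2 K_0^2 f^4(-2a^2+b^2+3a^2 f'^2)$, and the two terms linear in the second or third derivative, namely $a^2 f f''(1-f'^2)(3b^2+(a^2-2b^2)f'^2)$ and $a^2 f^2 f' f'''(1-f'^2)$, both pick up a factor $-a^2 K_0 f^2(1-f'^2)$ and must be combined. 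Their coefficients add to $(3b^2+(a^2-2b^2)f'^2)+f'^2=3b^2+(1+a^2-2b^2)f'^2$, which is precisely the bracket appearing in \eqref{torloxcg}; the lone term $-b^2(1-f'^2)^2(b^2+a^2 f'^2)$ carries over verbatim. Since $\kappa^2$ in the prefactor has already been rewritten in the form \eqref{kaploxcg}, reassembling the four surviving terms yields \eqref{torloxcg}. The only care required is tracking signs through the odd power $f''^3$ and correctly merging the two derivative terms; once that merge is carried out the result drops into the stated form.
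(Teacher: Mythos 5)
Your proof is correct and takes essentially the same route as the paper: substitute $f''=-K_0f$ (obtained from \eqref{Gauss}) and its derivative $f'''=-K_0f'$ into the general formulas \eqref{kaplox} and \eqref{torlox}. The term-by-term conversions and the merge of the two first-order-in-$K_0$ terms into $3b^2+(1+a^2-2b^2)f'^2$ all check out; you simply spell out details the paper leaves implicit.
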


\begin{proof}
Suppose that $R$ is a rotational surface in $\mathbb{E}^3$ with 
a constant Gaussian curvature $K_0$. 
From the equation \eqref{Gauss}, we get $f''(s)=-K_0f(s)$.
Considering this in the equations \eqref{kaplox}
and \eqref{torlox}, we get the curvature and torsion of $\alpha$ on 
such a rotational surface $R$ given by \eqref{kaploxcg} and \eqref{torloxcg}, respectively. 
\end{proof}

\begin{theorem}
Let $R$ be a rotational surface in $\mathbb{E}^3$  and $\alpha$ be a loxodrome on $R$ given by \eqref{rotsurf} and \eqref{lox},
respectively. If $R$ is a flat rotational surface, then the loxodrome $\alpha$ on $R$ becomes a general helix. 
\end{theorem}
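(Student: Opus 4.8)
The plan is to combine the classification of flat rotational surfaces with the helix characterization recalled in Section \ref{Sec:2}. Since $R$ is flat, its constant Gaussian curvature is $K_0=0$, so Theorem \ref{classGauss}(ii) gives $f(s)=As+B$ with $|A|\le 1$. The crucial consequence is that $f'(s)=A$ is constant, whence $f''\equiv 0$, $f'''\equiv 0$, and $g'(s)=\pm\sqrt{1-f'^2}=\pm\sqrt{1-A^2}$ is constant as well.

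I would then verify the constant-angle condition directly, which is the cleanest route. Taking the axis of rotation $u=(0,0,1)$ as the fixed direction and using the tangent vector \eqref{firstder}, one computes
\[
\langle\dot\alpha(t),u\rangle=a\,g'(s(t))=\pm a\sqrt{1-A^2},
\]
a constant, since $a=\cos\psi$ is fixed. As $\alpha$ is unit speed and $u$ is a unit vector, \eqref{angle} shows that $T=\dot\alpha$ makes a constant angle with $u$, so $\alpha$ is a general helix by definition.

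Alternatively, and perhaps more in keeping with the formulas just established, one can substitute $f''=0$ and $f'''=0$ into \eqref{kaplox} and \eqref{torlox}. Every summand of the torsion bracket except the first carries a factor of $f''$ or $f'''$ and thus vanishes, while the curvature collapses to the single parallel term. After cancelling the common factor $(b^2+a^2f'^2)$ against $\kappa^2$, this leaves
\[
\kappa=\frac{|b|\sqrt{b^2+a^2A^2}}{f},\qquad \tau=\frac{ab\sqrt{1-A^2}}{f},
\]
so that $\kappa/\tau$ equals the $f$-independent constant $|b|\sqrt{b^2+a^2A^2}\,/\,(ab\sqrt{1-A^2})$; the criterion from Section \ref{Sec:2} again yields a general helix.

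No step presents a genuine difficulty. The only care needed is in the formula-based route, where one must track which terms of the torsion numerator survive and confirm that the surviving factor $(b^2+a^2f'^2)$ cancels cleanly against $\kappa^2$, leaving a ratio with no residual dependence on $s$ (hence on $t$). The geometric route via the constant angle with the axis avoids even this bookkeeping, which is why I would present it first.
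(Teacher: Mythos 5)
Your argument is correct, and your primary route is genuinely different from the paper's. The paper does exactly what you describe as the alternative: it specializes the curvature and torsion formulas to $K_0=0$ (equivalently, drops every term of \eqref{kaplox} and \eqref{torlox} carrying $f''$ or $f'''$), arrives at $\kappa=\pm\frac{b}{f}\sqrt{b^2+a^2f'^2}$ and $\tau=\frac{ab}{f}\sqrt{1-f'^2}$, notes from \eqref{zeroGaus} that $f'=A$ is constant, and concludes via the Lancret-type criterion that $\kappa/\tau$ is a nonzero constant. Your computation reproduces this, including the cancellation of the factor $(b^2+a^2f'^2)$ against $\kappa^2$, so that part matches the paper. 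Your primary route instead verifies the definition of a general helix directly: the third component of \eqref{firstder} is $a\,g'(s(t))=\pm a\sqrt{1-A^2}$, a constant, so by \eqref{angle} the unit tangent makes a constant angle with the rotation axis $(0,0,1)$. This is more elementary and also more robust: the ratio criterion as invoked in the paper tacitly assumes $\tau\neq 0$, and it degenerates precisely at $a=0$, $b=0$, or $|A|=1$ (parallel circle, meridian line, planar logarithmic spiral), all cases where $\tau$ vanishes; the constant-angle computation covers these uniformly with no case analysis. What the formula route buys in exchange is the explicit expressions for $\kappa$ and $\tau$, which the paper wants anyway for the subsequent parametrization results, so presenting the geometric argument first and the formulas second, as you propose, is a reasonable improvement.
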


\begin{proof}
Assume that $R$ is a flat rotational surface in $\mathbb{E}^3$ and $\alpha$ is a loxodrome on $R$.
For $K_0=0$, the equations \eqref{kaploxcg} and \eqref{torloxcg}
give
\begin{align}
     \kappa &=\pm\frac{b}{f}\sqrt{b^2+a^2f'^2},\\
    \tau&=\frac{ab}{f}\sqrt{1-f'^2}.   
\end{align}
On the other hand, the equation \eqref{zeroGaus} implies 
$f'(s)=A$ for a constant $|A|\leq 1$. 
Thus, it can be seen that 
the ratio of $\kappa$ and $\tau$ is a nonzero constant. 
Therefore, we get that the loxodrome $\alpha$ on $R$ is a general helix.
\end{proof}

Then, we give the explicit parametrization of such loxodromes by the following theorem. 
\begin{theorem}
\label{paraflat}
A loxodrome $\alpha$ on a flat rotational surface $R$ in $\mathbb{E}^3$ can be parametrized by
\begin{equation}
\label{loxzero}
    \alpha(t)=\left(
    \begin{array}{c}
    (aA t+B_0)\cos{\theta(t)},\\
    (aA t+B_0)\sin{\theta(t)},\\
    \pm a\sqrt{1-A^2}t+A_0
    \end{array}
    \right)
\end{equation}
where the function $\theta(t)$ is given by one of the followings:
\begin{itemize}
    \item [(i.)] $\displaystyle{\theta(t)
    =\frac{\varepsilon\tan{\psi}}{A}\ln{|aAt+A_0|}}+\theta_0$ 
    for $A\neq 0$,
    
    \item [(ii.)] $\displaystyle{\theta(t)
    =\frac{b}{B_0}t}+\theta_0$ for $A=0$ and $B_0\neq 0$
\end{itemize}
and $a=\cos{\psi}, b=\varepsilon\sin{\psi}$, 
$|A|\leq 1$, $A_0, B_0$ and $\theta_0$ are constants. 
Also, the loxodrome $\alpha$ is a circle on the plane which is orthogonal to the axis of rotation for $|A|=1$; 
it is a general helix on the circular cone for $0<|A|<1$ 
and it is a circular helix on the cylinder for $A=0$. 
\end{theorem}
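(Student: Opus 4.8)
The plan is to start from the general loxodrome parametrization \eqref{lox} and specialize it to a flat rotational surface, where $K_0=0$ forces $f$ to be the affine function $f(s)=As+B$ with $|A|\le 1$ from \eqref{zeroGaus}. First I would substitute $s(t)=at+c$ (with $a=\cos\psi$) into $f(s(t))$, which gives $f(s(t))=A(at+c)+B=aAt+B_0$ after absorbing $Ac+B$ into a single constant $B_0$; this immediately produces the first two coordinate factors in \eqref{loxzero}. For the third coordinate, since the profile curve has arc-length parametrization we have $g'(s)=\pm\sqrt{1-f'^2}=\pm\sqrt{1-A^2}$, a constant, so $g(s(t))=\pm\sqrt{1-A^2}\,(at+c)+r$ is affine in $t$, giving $\pm a\sqrt{1-A^2}\,t+A_0$ after renaming constants. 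This disposes of the $x$, $y$, and $z$ coordinate expressions with essentially no computation.

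The substantive step is evaluating the angle function $\theta(t)=\varepsilon\sin\psi\int_{t_0}^t \frac{d\xi}{f(s(\xi))}$ from Theorem \ref{thmloxodrome}. With $f(s(\xi))=aA\xi+B_0$ this becomes $b\int_{t_0}^t \frac{d\xi}{aA\xi+B_0}$, where $b=\varepsilon\sin\psi$. For $A\ne 0$ the integrand is $\frac{b}{aA\xi+B_0}$, whose antiderivative is $\frac{b}{aA}\ln|aA\xi+B_0|$; writing $\frac{b}{aA}=\frac{\varepsilon\sin\psi}{A\cos\psi}=\frac{\varepsilon\tan\psi}{A}$ and folding the lower-limit evaluation into a constant $\theta_0$ yields case (i.). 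For $A=0$ the surface forces $f\equiv B_0$ constant, so the integral is simply $\frac{b}{B_0}(t-t_0)$, giving the linear expression in case (ii.). I do not anticipate any real obstacle here; the only care needed is bookkeeping of integration constants and the identity $\frac{b}{aA}=\frac{\varepsilon\tan\psi}{A}$.

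Finally I would verify the three geometric classifications. For $|A|=1$ we have $\sqrt{1-A^2}=0$, so the $z$-coordinate is the constant $A_0$ and $\alpha$ lies in a horizontal plane orthogonal to the rotation axis; combined with constant speed one sees it is a circle (consistent with Case 2 / Case 1 degenerations). For $0<|A|<1$ the profile line $f(s)=As+B$ generates a circular cone, and the preceding theorem already shows $\alpha$ is a general helix, so it is a general helix on a cone. For $A=0$ the profile is the vertical line $f\equiv B_0$, generating a cylinder of radius $B_0$; then $f'=0$ gives constant $\kappa=b/B_0$ and constant $\tau$ from the flat-case curvature formulas, so $\alpha$ is a circular helix on the cylinder. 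The main thing to be careful about is matching the sign conventions ($\varepsilon=\pm1$, the $\pm$ on $g'$) and the constant relabelings so that the stated closed forms in \eqref{loxzero} and cases (i.)--(ii.) come out exactly as written, rather than any genuine analytic difficulty.
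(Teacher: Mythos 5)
Your proposal is correct and follows essentially the same route as the paper's proof: substitute the flat-case profile $f(s)=As+B$ from Theorem \ref{classGauss} into the general loxodrome parametrization of Theorem \ref{thmloxodrome}, compute $g(s(t))$ from $g'=\pm\sqrt{1-A^2}$, integrate for $\theta(t)$ in the two cases $A\neq 0$ and $A=0$, and identify the surface as plane, cone, or cylinder to classify the curve. Your form $\theta(t)=\frac{\varepsilon\tan\psi}{A}\ln|aAt+B_0|+\theta_0$ is in fact the internally consistent one --- the $A_0$ inside the logarithm in the theorem statement appears to be a typo for $B_0$, since the integrand's denominator is $f(s(t))=aAt+B_0$.
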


\begin{proof}
Assume that $R$ is a flat rotational surface in $\mathbb{E}^3$, that is $K_0=0$. Then, Theorem \ref{classGauss} implies that the function $f(s)$ is given by \eqref{zeroGaus}. 
Since $f'^2(s)+g'^2(s)=1$ and $s(t)=at+c$, we find 
$g(s(t))=\pm a\sqrt{1-A^2}t+A_0$ 
for constants $A_0, A$ and $a=\cos{\psi}$.
Taking into account the function $f(s)$, 
we calculate $\theta(t)$ in Theorem \ref{thmloxodrome} 
with respect to the cases of constant $A$. 

Moreover, we know that if $A=0$, $R$ is a cylinder of radius $B$;
$R$ is a plane which is orthogonal to the axis of rotation if $|A|=1$ and $R$ is a circular cone if $0<|A|<1$, 
(see \cite{Kuhnel}). Thus, we get the desired result. 
\end{proof}

Now, we give an example of a loxodrome on a flat rotational surface in $\mathbb{E}^3$.

\begin{example}
Let take $\psi=\frac{\pi}{3}$ and $\varepsilon=1$. Then, we find $\theta(t)=2\sqrt{3}\ln{\frac{t}{4}}$ for $t>0$.
Thus, the parametrization of loxodrome (general helix) is
\[\alpha(t)=\left(\frac{1}{4}t\cos\left(2\sqrt{3}\ln{\frac{t}{4}}
\right), \frac{1}{4}t\sin\left(2\sqrt{3}\ln{\frac{t}{4}}\right), \frac{\sqrt{3}}{4}t\right).\]
The graphs of flat rotational surface (circular cone) $R$, loxodrome (general helix) and meridian ($\theta=0$) can be drawn by using Mathematica as follows:
\begin{figure}[htbp]
\includegraphics[height=90mm]{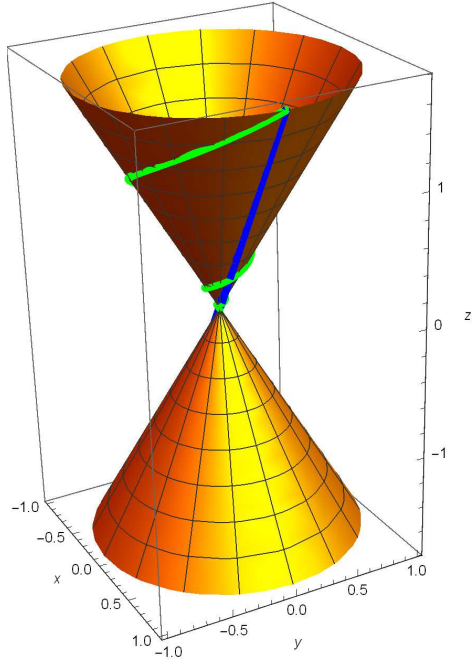}
\caption {Flat rotational surface (circular cone), loxodrome (general helix) (green), meridian (blue).}
\label{fig1}
\end{figure}
\end{example}

\newpage
\begin{theorem}
\label{paraposGauss}
A loxodrome $\alpha$ on a rotational surface $R$ in $\mathbb{E}^3$ with a positive constant Gaussian curvature $K_0$ can be parametrized by 
\begin{equation}
\label{loxpos}
    \alpha(t)=\left(
    \begin{array}{c}
    (A\cos{(a\sqrt{K_0}t+a_0)}+B\sin{(a\sqrt{K_0}t+a_0)})
    \cos{\theta(t)},\\
    (A\cos{(a\sqrt{K_0}t+a_0)}+B\sin{(a\sqrt{K_0}t+a_0)})
    \sin{\theta(t)},\\
    \displaystyle{\pm\int_{0}^{at+c}
    \sqrt{1-K_0(A\sin{(\sqrt{K_0}\xi)}-B\cos{(\sqrt{K_0}\xi)})^2}
    d\xi}
    \end{array}
    \right)
\end{equation}
where the function $\theta(t)$ is given by one of the followings:
\begin{itemize}
    \item [(i.)]
    $\displaystyle{\theta(t)
    =\frac{\varepsilon\tan{\psi}}{B\sqrt{K_0}}
    \ln{\left|{\tan\left({\frac{1}{2}(a\sqrt{K_0}t+a_0)}\right)}
    \right|+\theta_0}}$ for $A=0$ and $B\neq 0$,

    \item [(ii.)]
    $\displaystyle{\theta(t)
    =\frac{2\varepsilon\tan{\psi}}{\sqrt{K_0(A^2+B^2)}}
    \arctanh{\left(\frac{1}{\sqrt{A^2+B^2}}
    \left(A\tan{\left(\frac{1}{2}(a\sqrt{K_0}t+a_0)\right)}-B\right)\right)}+\theta_0}$ for $A\neq 0$
\end{itemize}
and $A, B, a_0, \theta_0$, $a=\cos{\psi}$ are constants.
\end{theorem}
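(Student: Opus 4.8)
The plan is to specialize the general loxodrome parametrization of Theorem~\ref{thmloxodrome} to the profile function dictated by Theorem~\ref{classGauss}(i). Since $R$ has positive constant Gaussian curvature $K_0$, equation \eqref{posGaus} gives $f(s)=A\cos(\sqrt{K_0}s)+B\sin(\sqrt{K_0}s)$. First I would substitute the arc-length relation $s(t)=at+c$ with $a=\cos\psi$ supplied by Theorem~\ref{thmloxodrome}; introducing the abbreviation $a_0=\sqrt{K_0}\,c$ rewrites $f(s(t))$ as $A\cos(a\sqrt{K_0}t+a_0)+B\sin(a\sqrt{K_0}t+a_0)$, which is exactly the radial factor appearing in the first two components of \eqref{loxpos}.

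For the third component I would use the relation $g'(s)=\pm\sqrt{1-f'^2(s)}$ coming from $f'^2+g'^2=1$. Differentiating the profile function gives $f'(s)=\sqrt{K_0}\bigl(B\cos(\sqrt{K_0}s)-A\sin(\sqrt{K_0}s)\bigr)$, hence $f'^2(s)=K_0\bigl(A\sin(\sqrt{K_0}s)-B\cos(\sqrt{K_0}s)\bigr)^2$. Integrating $g'$ in the variable $s$ from $0$ to $at+c$ then produces precisely the integral expression recorded in the third slot of \eqref{loxpos}.

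The heart of the argument, and the step I expect to be the main obstacle, is the explicit evaluation of
\[
\theta(t)=\varepsilon\sin\psi\int^t_{t_0}\frac{d\xi}{f(s(\xi))}
=\varepsilon\sin\psi\int\frac{dt}{A\cos(a\sqrt{K_0}t+a_0)+B\sin(a\sqrt{K_0}t+a_0)}.
\]
After the linear substitution $u=a\sqrt{K_0}t+a_0$, which contributes a factor $1/(a\sqrt{K_0})$ and converts $\varepsilon\sin\psi/a$ into $\varepsilon\tan\psi$, the problem reduces to the standard integral $\int du/(A\cos u+B\sin u)$, which I would treat in two cases. For $A=0$ the integrand is a multiple of $\csc u$, so the antiderivative is $\tfrac{1}{B}\ln|\tan(u/2)|$, yielding case (i.). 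For $A\neq 0$ I would apply the Weierstrass substitution $w=\tan(u/2)$, reducing the integral to $\int 2\,dw/(-Aw^2+2Bw+A)$; completing the square in the denominator and recognizing an $\arctanh$ antiderivative gives case (ii.), with the constant $\sqrt{A^2+B^2}$ arising from the discriminant.

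The delicate bookkeeping lies in matching the multiplicative constants, so that $\varepsilon\tan\psi$, $\sqrt{K_0}$, and $\sqrt{A^2+B^2}$ land in the positions claimed in (i.) and (ii.), and in handling the sign of $A$ inside the half-angle expression; any sign ambiguity only shifts the additive constant and is absorbed into $\theta_0$. Assembling the three computed components then yields the parametrization \eqref{loxpos}, completing the proof.
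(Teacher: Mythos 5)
Your proposal is correct and follows essentially the same route as the paper's own (much terser) proof: substitute the profile function \eqref{posGaus} from Theorem~\ref{classGauss} into the general loxodrome parametrization of Theorem~\ref{thmloxodrome}, recover $g$ from $g'=\pm\sqrt{1-f'^2}$, and evaluate the $\theta$-integral case by case in $A$ and $B$. Your explicit Weierstrass-substitution computation, including the constant $2/\sqrt{A^2+B^2}$ from completing the square, checks out and simply supplies the details the paper leaves implicit.
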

\begin{proof}
Assume that the rotational surface $R$ in $\mathbb{E}^3$ has a positive constant Gaussian curvature $K_0$. 
Then, Theorem \ref{classGauss} implies that the function $f(s)$ is given by \eqref{posGaus}. 
Using $g'^2(s)=\pm\sqrt{1-f'^2(s)}$ and $s(t)=at+c$, 
we find the function
$g(s(t))$.  
Calculating $\theta(t)$ in Theorem \ref{thmloxodrome}  
for the function $f$ with respect to the cases of constants $A$ and $B$, we get desired result.  
\end{proof}

\begin{theorem}
\label{paranegGauss}
A loxodrome $\alpha$ on a rotational surface $R$ in $\mathbb{E}^3$ with a negative constant Gaussian curvature $K_0$ can be parametrized by  
\begin{equation}
\label{loxneg}
    \alpha(t)=\left(
    \begin{array}{c}
    (A\cosh{(a\sqrt{-K_0}t+a_0)}+B\sinh{(a\sqrt{-K_0}t+a_0)})
    \cos{\theta(t)},\\
    (A\cosh{(a\sqrt{-K_0}t+a_0)}+B\sinh{(a\sqrt{-K_0}t+a_0)})
    \sin{\theta(t)},\\
    \displaystyle{\pm\int_{0}^{at+c}
\sqrt{1+K_0(A\sinh{(\sqrt{-K_0}\xi)}
+B\cosh{(\sqrt{-K_0}\xi)})^2}d\xi}
    \end{array}
    \right)
\end{equation}
where the function $\theta(t)$ is given by one of the followings:
\begin{itemize}
    \item [(i.)] 
    $\displaystyle{\theta(t)
    =\frac{\varepsilon\tan{\psi}}{B\sqrt{-K_0}}
    \ln{\left|\tanh{\left(\frac{1}{2}(a\sqrt{-K_0}t+a_0)\right)}
    \right|}
    +\theta_0}$ 
    for $A=0$ and $B\neq 0$,
    
    \item [(ii.)] 
    $\displaystyle{\theta(t)
    =\frac{2\varepsilon\tan{\psi}}{\sqrt{K_0(B^2-A^2)}}
    \arctan{\left(\frac{1}{\sqrt{A^2-B^2}}
    \left(A\tanh{\left(\frac{1}{2}(a\sqrt{-K_0}t+a_0)\right)}
    +B\right)
    \right)}+\theta_0}$
    for $A\neq 0$ and $B^2<A^2$,
    
    \item [(iii.)]
     $\displaystyle{\theta(t)
    =\frac{2\varepsilon\tan{\psi}}{\sqrt{K_0(A^2-B^2)}}
    \arctanh{\left(\frac{1}{\sqrt{B^2-A^2}}
    \left(A\tanh{\left(\frac{1}{2}(a\sqrt{-K_0}t+a_0)\right)}
    +B\right)
    \right)}+\theta_0}$
    for $B\neq 0$ and $A^2<B^2$,
    
    \item [(iv.)]
    $\displaystyle{\theta(t)
    =-\frac{\varepsilon\tan{\psi}}{A\sqrt{-K_0}}
    e^{-(a\sqrt{-K_0}t+a_0)}+\theta_0}$ for $A=B\neq 0$
\end{itemize}
and $A, B, a_0, \theta_0$, $a=\cos{\psi}$ are constants.
\end{theorem}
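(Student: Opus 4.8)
The plan is to follow the same template used in the proofs of Theorem \ref{paraflat} and Theorem \ref{paraposGauss}: invoke the classification of constant-Gaussian-curvature rotational surfaces to pin down the profile function, substitute the arc-length relation $s(t)=at+c$, and then reduce the determination of $\theta(t)$ to a single one-variable integral which splits into cases.

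First I would apply Theorem \ref{classGauss}(iii.) to obtain $f(s)=A\cosh(\sqrt{-K_0}s)+B\sinh(\sqrt{-K_0}s)$, and set $a_0=c\sqrt{-K_0}$ so that $f(s(t))=A\cosh(a\sqrt{-K_0}t+a_0)+B\sinh(a\sqrt{-K_0}t+a_0)$ with $a=\cos\psi$. This gives the first two components of $\alpha$ in \eqref{loxneg} directly from \eqref{lox}. For the third component I would use $g'(s)=\pm\sqrt{1-f'^2(s)}$ together with $f'(s)=\sqrt{-K_0}(A\sinh(\sqrt{-K_0}s)+B\cosh(\sqrt{-K_0}s))$; since $K_0<0$ this yields $1-f'^2=1+K_0(A\sinh+B\cosh)^2$, and integrating against $s(t)=at+c$ produces the displayed integral for $g(s(t))$.

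The substance is evaluating $\theta(t)=\varepsilon\sin\psi\int_{t_0}^t d\xi/f(s(\xi))$ from Theorem \ref{thmloxodrome}. With the substitution $u=a\sqrt{-K_0}\xi+a_0$ and the identity $\sin\psi/a=\tan\psi$, this becomes $\theta=\frac{\varepsilon\tan\psi}{\sqrt{-K_0}}\int\frac{du}{A\cosh u+B\sinh u}+\theta_0$. The key move is the hyperbolic Weierstrass substitution $w=\tanh(u/2)$, under which $\cosh u=(1+w^2)/(1-w^2)$, $\sinh u=2w/(1-w^2)$ and $du=2\,dw/(1-w^2)$, collapsing the integrand to the rational form $\int \frac{2\,dw}{Aw^2+2Bw+A}$.

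The main obstacle, and the reason four cases appear, is that the antiderivative of $2/(Aw^2+2Bw+A)$ is governed by the discriminant $4(B^2-A^2)$ of the quadratic and by whether the leading coefficient $A$ vanishes. When $A=0$ the integral reduces to $\frac{1}{B}\int du/\sinh u=\frac{1}{B}\ln|\tanh(u/2)|$, giving case (i.). For $A\neq0$ I would complete the square as $(w+B/A)^2+(A^2-B^2)/A^2$: when $A^2>B^2$ the constant term is positive and integration yields an $\arctan$ (case (ii.)); when $B^2>A^2$ it is negative and yields an $\arctanh$ (case (iii.)); and the degenerate boundary $A=B$ makes $A\cosh u+B\sinh u=Ae^u$, so $\int e^{-u}du=-e^{-u}$ produces the exponential of case (iv.). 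Re-expressing $w=\tanh(\tfrac12(a\sqrt{-K_0}t+a_0))$ and absorbing sign and constant factors into $\theta_0$ matches each branch to the stated formula; verifying that the radical prefactors $\sqrt{K_0(B^2-A^2)}$ and $\sqrt{K_0(A^2-B^2)}$ come out real in their respective cases, using $K_0<0$, is the only bookkeeping that needs care.
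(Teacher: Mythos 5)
Your proposal is correct and follows exactly the route of the paper's own (much terser) proof: invoke Theorem \ref{classGauss}(iii.) for $f$, recover $g$ from $g'=\pm\sqrt{1-f'^2}$ with $s(t)=at+c$, and evaluate $\theta(t)=\varepsilon\sin\psi\int d\xi/f(s(\xi))$ case by case. The hyperbolic Weierstrass substitution and the discriminant analysis of $Aw^2+2Bw+A$ that you spell out are precisely the computation the paper leaves implicit in the phrase ``with respect to the cases of constants $A$ and $B$.''
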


\begin{proof}
Assume that a rotational surface $R$ in $\mathbb{E}^3$ has a negative constant Gaussian curvature $K_0$. 
Then, Theorem \ref{classGauss} implies that the function $f(s)$ is given by \eqref{negGaus}. 
From $g'(s)=\pm{\sqrt{1-f'^2(s)}}$ and $s(t)=at+c$, we find
the function $g(s(t))$.
Calculating $\theta(t)$ in Theorem \ref{thmloxodrome}  
for the function $f$ with respect to the cases of constants $A$ and $B$, we get desired result.  
\end{proof}

Moreover, we know that the rotational surface $R$ with a negative constant Gaussian curvature $K_0$ is 
so called conic type for $B^2>A^2$ and it is called hyperboloid
type for $B^2<A^2$, (see \cite{Kuhnel}).

Now, we give some examples of loxodromes on rotational surfaces with constant Gaussian curvature.
\begin{example}
For $K_{0}>0$ and $B=0$ in \eqref{posGaus}, we have 
\begin{equation}
f\left( s\right) =A\cos \left( \sqrt{K_{0}}s\right) \text{ and }g\left(
s\right) =\int_{0}^{s}\sqrt{1-A^{2}K_{0}\sin ^{2}(\sqrt{K_{0}}\xi) 
}d\xi.
\end{equation}
Here, $g(s)$ is the elliptic integral of second kind. 
Then if we take as $A^{2}K_{0}=1$ we get sphere, in case $0<A^{2}K_{0}<1,$
we have an elongated sphere and while $A^{2}K_{0}>1$ we obtain a oblate sphere. From Theorem \ref{paraposGauss}, 
the parametrization the loxodrome on sphere, elongated sphere or oblate sphere is found as:%
\begin{equation*}
\alpha \left( t\right) =\left( 
\begin{array}{c}
A\cos \left( a\sqrt{K_{0}}t\right)\cos{\theta(t)} , \\ 
A\cos \left( a\sqrt{K_{0}}t\right) \sin{\theta(t)}, \\ 
\displaystyle{\pm\int_{0}^{at+c} \sqrt{1-A^{2}K_{0}\sin ^{2}(\sqrt{K_{0}}\xi) }d\xi}%
\end{array}%
\right)
\end{equation*}
where 
$\theta(t)=
\frac{\varepsilon\tan{\psi}}{A\sqrt{K_0}}
\ln{|\sec{(a\sqrt{K_0}t+a_0)}+\tan{(a\sqrt{K_0}t+a_0)}|}
$.
The curvature and torsion of the above curve can be easily computed by using
\eqref{kaploxcg} and \eqref{torloxcg} for $f\left( s\right) =A\cos \left( \sqrt{K_{0}}s\right) ,$ $%
s(t)=at+c.$

For $A=\frac{\sqrt{2}}{2}$, $K_{0}=1$, $\psi=\frac{\pi}{4}$ and $\varepsilon=1$, the graphs of elongated sphere $R$, loxodrome and meridian ($\theta=2\pi$) can be drawn by using Mathematica as follows:

\begin{figure}[htbp]
\includegraphics[height=90mm]{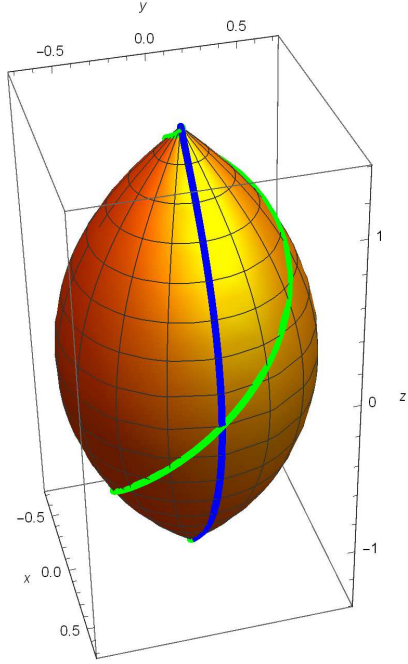}
\caption {Elongated sphere, loxodrome (green), meridian (blue).}
\label{fig3}
\end{figure}
\end{example}

\begin{example}
If we take as $A=B$ and $K_{0}=-1$ in \eqref{negGaus}, for 
\begin{equation*}
f\left( s\right) =Ae^{s}\text{ and }g\left( s\right) =\int_{0}^{s}\sqrt{%
1-A^{2}e^{2\xi}}d\xi,
\end{equation*}%
we get a rotational surface in $\mathbb{E}^3$ which is called Beltrami surface. 
From Theorem \ref{paranegGauss},
the parametrization of the loxodrome on the Beltrami surface is given by%
\begin{equation*}
\alpha \left( t\right) =\left( 
\begin{array}{c}
Ae^{at+c}\cos \left( -\frac{tan{\psi}}{A}%
\left(Ce^{-at}+D\right)\right), \\
Ae^{at+c}\sin \left( -\frac{tan{\psi}}{A}\left(Ce^{-at}+D\right)\right),\\
\displaystyle{\pm\int_{0}^{at+c}\sqrt{%
1-A^{2}e^{2\xi}}d\xi}
\end{array}
\right). 
\end{equation*} 
The curvature and the torsion of $\alpha$ are calculated as
\begin{equation}
    \kappa=\frac{\sqrt{b^4-b^2f^2+a^2f^4}}{f\sqrt{1-f^2}}
\end{equation}
and 
\begin{equation}
\tau= \frac{ab^3(b^2-2f^2+f^4)}{f\sqrt{1-f^2}(b^4-b^2f^2+a^4f^4)},
\end{equation}
where $f(s)=Ae^s$ and $s(t)=at+c$.

For $A=1$ and $\psi=\frac{\pi}{6}$, the graphs of Beltrami surface $R$, loxodrome and meridian ($\theta=3\pi/2$) can be drawn by using Mathematica as follows:
\begin{figure}[htbp]
\includegraphics[height=80mm]{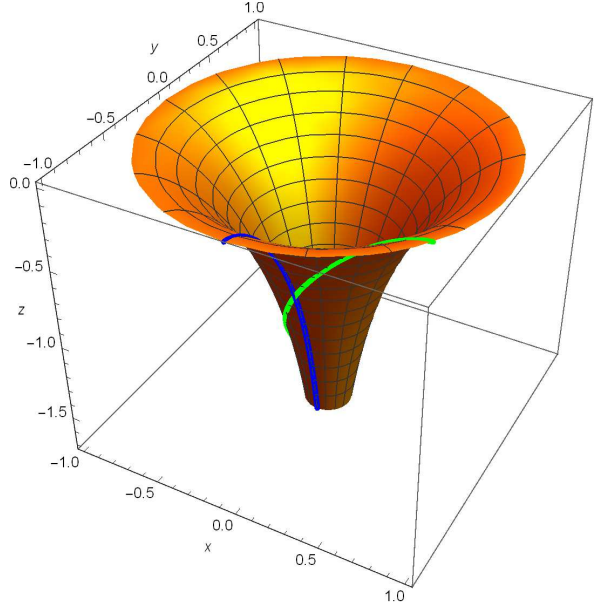}
\caption {Beltrami surface, loxodrome (green), meridian (blue).}
\label{fig2}
\end{figure}
\end{example}

Now, we investigate the loxodromes on CRPC rotational surfaces, 
that is $\kappa_1=k\kappa_2$ with a constant $k\neq 0$.
For $k=0$, the rotational surface $R$ becomes flat in which case
was examined before. 

\begin{lemma}
Let $R$ be a CRPC rotational surface such that $\kappa _{1}=k\kappa _{2}$ 
with a nonzero constant $k$ in 
$\mathbb{E}^{3}$ and $\alpha $ be a loxodrome on $R$. Then, the curvatures of $%
\alpha $ are as:%
\begin{equation}
\label{kappaw}
\kappa =\pm\frac{\sqrt{b^2+d^2a^2(a^2k^2+2kb^2-b^2)f^{2k}}}{f}
\end{equation}%
and 
\begin{equation}
\label{torw}
\tau=\pm \frac{abd}{\kappa^2f^{3-k}}
(b^2+a^2k^2
-d^2(a^2b^2+(a^4-2a^2b^2-a^2)k+(2a^2-3a^4)k^2+a^4k^3)f^{2k})
\end{equation}%
where $f$ denotes the function $f(s(t))$ and $d$ is a positive constant.
\end{lemma}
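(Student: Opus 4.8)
The plan is to convert the defining relation $\kappa_{1}=k\kappa_{2}$ into an ordinary differential equation for the profile function $f$, extract a first integral, and then substitute the resulting expressions for $f''$ and $f'''$ directly into the general curvature and torsion formulas \eqref{kaplox} and \eqref{torlox}. First I would use the principal curvatures \eqref{kappa_{1}} and \eqref{kappa_{2}}: the condition $\kappa_{1}=k\kappa_{2}$ reads $\frac{-f''}{\sqrt{1-f'^2}}=k\frac{\sqrt{1-f'^2}}{f}$, which simplifies to the separable ODE $-ff''=k(1-f'^2)$. Writing $f''=f'\frac{df'}{df}$ and separating variables gives $\int\frac{f'\,df'}{1-f'^2}=-k\int\frac{df}{f}$, whose first integral is $1-f'^2=d^2f^{2k}$ for a positive constant $d$. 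Differentiating this relation (or re-using the ODE) then yields $f''=-kd^2f^{2k-1}$ and $f'''=-k(2k-1)d^2f^{2k-2}f'$, and equivalently $g'=\pm\sqrt{1-f'^2}=\pm d f^{k}$.

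Next I would substitute these into the curvature formula \eqref{kaplox}. Each of the three terms becomes a monomial in $f$: the first reduces to $a^4k^2d^2f^{2k-2}$, the third to $2a^2b^2kd^2f^{2k-2}$, and the middle term to $\frac{b^4+a^2b^2-a^2b^2d^2f^{2k}}{f^2}$ after using $f'^2=1-d^2f^{2k}$. Invoking $a^2+b^2=\cos^2\psi+\sin^2\psi=1$ collapses $b^4+a^2b^2=b^2$ and merges the $f^{2k-2}$ coefficients into $d^2a^2(a^2k^2+2kb^2-b^2)$, which after factoring out $1/f^2$ gives exactly \eqref{kappaw}.

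The torsion \eqref{torw} is the main obstacle and requires considerably more care. Writing $P=1-f'^2=d^2f^{2k}$, the substitutions $ff''=-kP$, $f^2f''^2=k^2P^2$, $f^3f''^3=-k^3P^3$ and $f^2f'f'''=-k(2k-1)P(1-P)$ turn the five summands in the bracket of \eqref{torlox} into polynomials in $P$, with the remaining $f'^2=1-P$ factors simplified through $a^2+b^2=1$. After this reduction each summand is a combination of $P^2$ and $P^3$; summing them, the $P^2$-coefficient collapses to $-(b^2+a^2k^2)$ and the $P^3$-coefficient to $a^2b^2+(a^4-2a^2b^2-a^2)k+(2a^2-3a^4)k^2+a^4k^3$, so the entire bracket factors as $-P^2\{(b^2+a^2k^2)-(\cdots)P\}$. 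Since the denominator contributes $(1-f'^2)^{3/2}=d^3f^{3k}$ while $P^2=d^4f^{4k}$, all powers of $f$ combine to $f^{3-k}$ and the factors of $d$ collapse to a single $d$, producing \eqref{torw} (the $\pm$ tracking the sign choice in $g'$).

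I expect the torsion step to be the hard part, though the difficulty is essentially bookkeeping rather than conceptual: correctly collecting the cubic-in-$k$ coefficients from five separate terms and repeatedly invoking $a^2+b^2=1$ to eliminate the residual angle dependence. The curvature computation, by contrast, is short and serves as a useful consistency check on the derivative relations derived from the first integral before tackling the torsion.
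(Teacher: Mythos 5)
Your proposal is correct and follows essentially the same route as the paper: derive the ODE $ff''=k(f'^2-1)$ from \eqref{kappa_{1}} and \eqref{kappa_{2}}, integrate once to the first integral $f'=\pm\sqrt{1-d^2f^{2k}}$, and substitute into \eqref{kaplox} and \eqref{torlox}. The paper states this substitution in one line, whereas you carry out the bookkeeping explicitly (including the correct expressions for $f''$, $f'''$ and the $P=1-f'^2$ reduction of the torsion bracket, whose $P^2$ and $P^3$ coefficients I verified match \eqref{kappaw} and \eqref{torw}).
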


\begin{proof}
We assume that the principal curvatures of $R$ hold 
$\kappa _{1}=k\kappa_{2}$. 
Then by using \eqref{kappa_{1}} and \eqref{kappa_{2}},
we get the following differential equation%
\begin{equation}
\label{difeqwc}
f(s) f''(s) =k(f'^{2}(s)-1).
\end{equation}%
From \eqref{difeqwc}, we also obtain the following equation
\begin{equation}
\label{difeqwc1}
f^{\prime }(s) =\pm \sqrt{1-d^2f^{2k}(s)}.
\end{equation}%
By considering \eqref{difeqwc} and \eqref{difeqwc1} with \eqref{kaplox} and \eqref{torlox}, 
we obtain the curvatures of loxodrome $\alpha $ given
by \eqref{kappaw} and \eqref{torw}.
\end{proof}
\begin{remark}
In \cite{Kuhnel}, we know that the rotational surface $R$ in $\mathbb{E}^3$ 
becomes some special surfaces according to the choice of $k$ given as follows.
\begin{itemize}
    \item [(i.)] For $k=1$, $R$ is a umbilic rotational surface, that is, it is a unit sphere.
    
    \item [(ii.)] For $k=-1$, $R$ is a minimal rotational surface, that is, it is a catenoid.

    \item [(iii.)] For $k=-\frac{1}{2}$, 
    $R$ is a Flamm's paraboloid which is known in general relativity. 
    
    \item [(iv.)] For $k=2$, $R$ is a mylar balloon.
\end{itemize}
Thus, we get the curvatures of the loxodromes on such rotational surfaces in $\mathbb{E}^3$ by using the equations \eqref{kappaw} and \eqref{torw} with respect to the values of $k$.
\end{remark}

Now, we apply Euler's Theorem for loxodrome on rotational
surface $R$.

\begin{proposition}
\label{pro2}
Let $\alpha $ be a loxodrome on rotational surface $R$ whose
the principal curvatures are $\kappa _{1}$
and $\kappa _{2}$ with the principal
direction $\Phi_{s}=\gamma_{\theta }^{\prime }(s)$ 
and $\Phi_{\theta}$. Then, the normal curvature of $\alpha$ is obtained as: 
\begin{equation}
\label{norcurv}
\kappa _{n}=a^{2}\kappa _{1}+b^{2}\kappa _{2}
\end{equation}%
where $a=\cos \psi $, $b=\varepsilon \sin \psi $ and $\psi $ is an
angle between the loxodrome $\alpha $\ and the meridian curve $\gamma_{\theta }(s)$.
\end{proposition}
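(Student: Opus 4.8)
The plan is to realize the normal curvature of the loxodrome as the value of the second fundamental form on its unit tangent, and then to expand that unit tangent in the orthonormal frame of principal directions. First I would recall that, by the definition of normal curvature given in the preliminaries, $\kappa_n(\dot{\alpha}) = II(\dot{\alpha},\dot{\alpha}) = \langle S(\dot{\alpha}),\dot{\alpha}\rangle$, so everything reduces to understanding how $\dot{\alpha}$ decomposes along the principal directions of $R$.

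Next I would set up the orthonormal principal frame. The meridian velocity $\Phi_s = \gamma_\theta'(s) = (f'\cos\theta, f'\sin\theta, g')$ is already a unit vector because the profile curve is arc--length parametrized, while $\Phi_\theta = (-f\sin\theta, f\cos\theta, 0)$ has norm $f$; hence $e_1 := \Phi_s$ and $e_2 := \Phi_\theta/f = (-\sin\theta,\cos\theta,0)$ form an orthonormal tangent frame on $R$. The structural fact I would invoke is that for a rotational surface the meridians and parallels are lines of curvature, so $e_1$ and $e_2$ are exactly the principal directions, giving $S(e_1)=\kappa_1 e_1$ and $S(e_2)=\kappa_2 e_2$ with $\kappa_1,\kappa_2$ as in \eqref{kappa_{1}} and \eqref{kappa_{2}}. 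I would justify this by checking that the second fundamental form is diagonal in the $(s,\theta)$--parametrization, that is $\langle \Phi_{s\theta}, N\rangle = 0$, which follows at once by differentiating \eqref{rotsurf} against \eqref{normal}.

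Then I would read off the decomposition of the tangent directly from \eqref{firstder}. Comparing the given expression for $\dot{\alpha}$ with the explicit forms of $e_1$ and $e_2$ above, one sees immediately that $\dot{\alpha} = a\,e_1 + b\,e_2$; this also re-confirms that $\psi$ is the angle between $\alpha$ and the meridian, since $\langle \dot{\alpha}, e_1\rangle = a = \cos\psi$. Substituting $\dot{\alpha}=a e_1 + b e_2$ into $\kappa_n = II(\dot{\alpha},\dot{\alpha})$ and using bilinearity together with $II(e_i,e_j) = \langle S(e_i),e_j\rangle = \kappa_i\,\delta_{ij}$ makes the cross term $2ab\,II(e_1,e_2)$ vanish and yields $\kappa_n = a^2\kappa_1 + b^2\kappa_2$, as claimed.

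The computations are all routine; the only point requiring care --- and the step I regard as the main obstacle --- is the justification that the coordinate directions $\Phi_s,\Phi_\theta$ are genuinely the principal directions rather than merely an orthogonal tangent frame. Once the off--diagonal entry of the second fundamental form is shown to vanish, Euler's theorem applies verbatim and the remainder of the argument is purely formal.
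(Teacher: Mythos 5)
Your proof is correct, but it takes a different route from the paper's. The paper evaluates $S(T)$ by differentiating the unit normal directly along the loxodrome, $S(T)=-\frac{dN(\alpha(t))}{dt}$, writes out the resulting vector in coordinates, and then computes the inner product $\langle S(T),T\rangle$ explicitly, recognizing the outcome as $a^{2}\kappa_{1}+b^{2}\kappa_{2}$ via the formulas \eqref{kappa_{1}} and \eqref{kappa_{2}}. You instead diagonalize first: you verify that the coordinate directions are lines of curvature (by checking $\langle \Phi_{s\theta},N\rangle=0$ together with the orthogonality of $\Phi_s$ and $\Phi_\theta$), read off the decomposition $\dot{\alpha}=a\,e_{1}+b\,e_{2}$ from \eqref{firstder}, and then invoke bilinearity of $II$ --- that is, you literally carry out the Euler's theorem argument that the paper only names in its surrounding text. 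Your version buys conceptual clarity and avoids differentiating $N$ along the curve at all; it also makes transparent why the cross term vanishes and why the coefficients are exactly $\cos^{2}\psi$ and $\sin^{2}\psi$. The paper's version buys self-containedness: it needs no structural fact about meridians and parallels being principal, only the chain rule and the explicit formula \eqref{normal}. Both arguments are complete; your identification of the diagonality of $II$ as the one step requiring justification is exactly right, and you do justify it.
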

\begin{proof}
Assume that $\alpha$ is a loxodrome on the rotational surface $R$ in $\mathbb{E}^3$ given by \eqref{lox}. 
Then, we know the tangent vector $T$ of the loxodrome $\alpha$ 
given as \eqref{firstder}. 
On the other hand, from \eqref{shapeop} and \eqref{normal}, we have
\begin{eqnarray}
\label{shapelox}
S(T)&=&-\frac{dN(\alpha(t) )}{dt} \\
&=&\left( ag^{\prime \prime }(s)\cos\theta(t)-b%
\frac{g^{\prime }}{f}\left( s\right) \sin \theta(t), 
ag^{\prime\prime }(s)\sin\theta(t)
+b\frac{g^{\prime }}{f}%
\left( s\right) \cos \theta (t),af^{\prime \prime }(s) \right) .  \notag
\end{eqnarray}%
By using \eqref{kappa_{1}}, \eqref{kappa_{2}}, \eqref{firstder} and \eqref{shapelox}, we get%
\begin{eqnarray*}
\kappa _{n}=II(T,T)=\left \langle S(T),T\right \rangle
=a^{2}\kappa _{1}+b^{2}\kappa _{2}.
\end{eqnarray*}
It completes the proof.
\end{proof}
\begin{corollary}
Let $R$ be a CRPC rotational surface in $\mathbb{E}^{3}$ such that $\kappa _{1}=-\tan ^{2}\psi\kappa _{2}$  
and $\alpha $ be a loxodrome on $R$ 
which cuts the meridian curves by the
angle $\psi $. Then, the loxodrome $\alpha $ is a asymptotic curve of $R$. 
\end{corollary}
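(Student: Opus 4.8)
The plan is to invoke Proposition~\ref{pro2} directly and then exploit the hypothesis on the principal curvatures. By Proposition~\ref{pro2}, for any loxodrome $\alpha$ on a rotational surface $R$ cutting the meridians at the constant angle $\psi$, the normal curvature along $\alpha$ is
\begin{equation*}
\kappa_{n} = a^{2}\kappa_{1} + b^{2}\kappa_{2},
\end{equation*}
where $a=\cos\psi$ and $b=\varepsilon\sin\psi$. Recall from the definition in the preliminaries that $\alpha$ is an asymptotic curve precisely when $\kappa_{n}(\dot{\alpha})=II(\dot{\alpha},\dot{\alpha})=0$ identically along the curve. Hence it suffices to show that the expression above vanishes under the CRPC assumption $\kappa_{1}=-\tan^{2}\psi\,\kappa_{2}$.

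First I would record that $b^{2}=\varepsilon^{2}\sin^{2}\psi=\sin^{2}\psi$, since $\varepsilon=\pm 1$, so the sign ambiguity in $b$ plays no role. Next I would substitute the hypothesis $\kappa_{1}=-\tan^{2}\psi\,\kappa_{2}$ into the formula for $\kappa_{n}$, obtaining
\begin{equation*}
\kappa_{n}
= \cos^{2}\psi\left(-\tan^{2}\psi\right)\kappa_{2} + \sin^{2}\psi\,\kappa_{2}
= -\sin^{2}\psi\,\kappa_{2} + \sin^{2}\psi\,\kappa_{2}
= 0,
\end{equation*}
where I used $\cos^{2}\psi\,\tan^{2}\psi=\sin^{2}\psi$. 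Since this holds at every point of $\alpha$, the velocity vectors $\dot{\alpha}$ are everywhere asymptotic directions, so $\alpha$ is an asymptotic curve of $R$, which is the desired conclusion.

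There is essentially no analytic obstacle here: the statement is an algebraic consequence of the Euler-type formula established in Proposition~\ref{pro2}, engineered so that the two terms cancel exactly. The only points worth flagging are implicit domain conditions, namely that $\tan\psi$ must be defined (so $\psi\neq\pi/2$) and that the CRPC surface with the prescribed ratio $k=-\tan^{2}\psi$ indeed exists; the latter is guaranteed by the earlier analysis of CRPC rotational surfaces, where the constant ratio $\kappa_{1}=k\kappa_{2}$ with $k\neq 0$ was realized through the profile function satisfying \eqref{difeqwc}. Thus the entire proof reduces to the one-line substitution above, and no lengthy computation is required.
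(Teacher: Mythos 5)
Your proposal is correct and follows exactly the route the paper intends: the paper's proof is the single line ``It is obvious from Proposition \ref{pro2},'' and your substitution $\kappa_{n}=\cos^{2}\psi\,(-\tan^{2}\psi)\kappa_{2}+\sin^{2}\psi\,\kappa_{2}=0$ is precisely the omitted computation. No issues.
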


\begin{proof}
It is obvious from Proposition \ref{pro2}.
\end{proof}

Using Theorem \ref{classminimal}, we give the following statement.
\begin{theorem}
Let $R$ be a minimal rotational surface in $\mathbb{E}^3$ 
and $\alpha $ be a loxodrome on $R$.
Then, the parametrization of $\alpha$ is given by 
\begin{equation}
\label{minimal}
\alpha \left( t\right) =\left( 
\begin{array}{c}
\frac{1}{n}\sqrt{1+n^{2}\left( at+p\right) ^{2}}\cos \left(\tan{\psi}
 \arcsinh \left( n\left(  at+p\right) \right)+q\right) , \\ 
\frac{1}{n}\sqrt{1+n^{2}\left( at+p\right) ^{2}}\sin \left(\tan{\psi}
 \arcsinh \left( n\left(  at+p\right) \right)+q\right) , \\ 
\frac{1}{n} \arcsinh \left( n\left( at+p\right) \right)+r 
\end{array}%
\right) 
\end{equation}
for constants $n,p, q, r$ and $a=\cos{\psi}$. 
\end{theorem}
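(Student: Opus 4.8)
The plan is to specialize the general loxodrome formula of Theorem~\ref{thmloxodrome} to the minimal-surface profile supplied by Theorem~\ref{classminimal}. First I would record that, since $R$ is minimal, Theorem~\ref{classminimal} gives $f(s)=\frac{1}{n}\sqrt{1+n^{2}(s+m)^{2}}$ and $g(s)=\pm\frac{1}{n}\arcsinh(n(s+m))+r$. By Theorem~\ref{thmloxodrome} the loxodrome has the shape \eqref{lox} with $s(t)=at+c$ where $a=\cos\psi$, so substituting this into $f$ and $g$ and setting $p:=c+m$ immediately produces $f(s(t))=\frac{1}{n}\sqrt{1+n^{2}(at+p)^{2}}$ and $g(s(t))=\frac{1}{n}\arcsinh(n(at+p))+r$ (choosing the $+$ branch). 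These are exactly the first and third components of \eqref{minimal}.

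The only remaining computation is the angular function $\theta(t)=\varepsilon\sin\psi\int_{t_0}^{t}\frac{d\xi}{f(s(\xi))}$. Inserting the explicit $f(s(\xi))$ converts this to $\varepsilon\sin\psi\int_{t_0}^{t}\frac{n\,d\xi}{\sqrt{1+n^{2}(a\xi+p)^{2}}}$. The key step is recognizing $\frac{1}{a}\arcsinh(n(a\xi+p))$ as an antiderivative of the integrand, so the integral equals $\frac{1}{a}\bigl[\arcsinh(n(at+p))-\arcsinh(n(at_0+p))\bigr]$. Since $\frac{\varepsilon\sin\psi}{a}=\frac{\varepsilon\sin\psi}{\cos\psi}=\varepsilon\tan\psi$, absorbing the lower-limit term into a constant $q:=-\varepsilon\tan\psi\,\arcsinh(n(at_0+p))$ yields $\theta(t)=\tan\psi\,\arcsinh(n(at+p))+q$, where the sign $\varepsilon$ merely fixes the sense of winding. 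This matches the argument of the trigonometric factors in \eqref{minimal}.

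Substituting these three components back into \eqref{lox} completes the proof. I do not anticipate any genuine obstacle: the single nonroutine step is evaluating the $\arcsinh$ integral, and the only bookkeeping to watch is the cancellation $\frac{\sin\psi}{\cos\psi}=\tan\psi$ together with the folding of the constant lower limit $t_0$ and the $\pm$/$\varepsilon$ sign choices into the constants $r$ and $q$.
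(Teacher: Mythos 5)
Your proposal is correct and follows exactly the route the paper takes: substitute the minimal-surface profile from Theorem~\ref{classminimal} into the general loxodrome parametrization of Theorem~\ref{thmloxodrome} and evaluate the $\arcsinh$ integral for $\theta(t)$, folding the constants and signs into $p$, $q$, $r$. You simply supply the explicit antiderivative computation that the paper leaves implicit.
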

\begin{proof}
Suppose that the rotational surface $R$ in $\mathbb{E}^3$ is minimal surface. 
Then, Theorem \ref{classminimal} implies that the functions $f(s)$ and $g(s)$ are given by \eqref{minimal1} and \eqref{minimal2}, respectively. 
Calculating $\theta(t)$ in Theorem \ref{thmloxodrome}  
for the functions $f(s)$ and $s(t)=at+c$, we obtain given result.
\end{proof}
\begin{theorem}
Let $R$ be a minimal rotational surface in $\mathbb{E}^3$.
Then, the curvatures of loxodrome $\alpha$ that cuts the
meridian curves of $R$ by the angle $\psi =\frac{\pi }{4}$ hold 
\begin{equation*}
\frac{\kappa }{\tau }=\lambda t+\mu 
\end{equation*}%
where $\lambda$ and $\mu$ are constants.
\end{theorem}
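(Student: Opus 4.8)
The plan is to recognize that a minimal rotational surface is a very special CRPC surface and then to feed the specialized data into the curvature/torsion formulas already established for CRPC surfaces. Since $H=0$ forces $\kappa_1=-\kappa_2$, a minimal rotational surface is exactly a CRPC rotational surface with $k=-1$ (the catenoid case of Theorem \ref{classminimal}, also recorded in the remark after \eqref{torw}). Hence I may invoke the CRPC Lemma and use \eqref{kappaw} and \eqref{torw} with $k=-1$. The hypothesis $\psi=\frac{\pi}{4}$ gives $a^2=b^2=\frac12$ and $ab=\frac{\varepsilon}{2}$, and since $k=-1$ we have $f^{2k}=f^{-2}$ and $f^{3-k}=f^{4}$.

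First I would substitute $k=-1$ and $a^2=b^2=\frac12$ into the bracketed polynomials in \eqref{kappaw} and \eqref{torw}. The coefficient multiplying $f^{2k}$ inside the radical in \eqref{kappaw} collapses to $a^2(a^2k^2+2kb^2-b^2)=-\tfrac12$, so that
\begin{equation*}
\kappa=\pm\frac{1}{f}\sqrt{\tfrac12-\frac{d^2}{2f^2}}=\pm\frac{\sqrt{f^2-d^2}}{\sqrt{2}\,f^2}.
\end{equation*}
Likewise, in \eqref{torw} one has $b^2+a^2k^2=1$, while the cubic-in-$k$ polynomial $a^2b^2+(a^4-2a^2b^2-a^2)k+(2a^2-3a^4)k^2+a^4k^3$ reduces to $1$ at $k=-1$; combined with $\kappa^2=\dfrac{f^2-d^2}{2f^4}$ and $ab=\dfrac{\varepsilon}{2}$, the common factor $f^2-d^2$ cancels and leaves
\begin{equation*}
\tau=\pm\frac{\varepsilon\,d}{f^2},\qquad\text{hence}\qquad
\frac{\kappa}{\tau}=\pm\frac{\sqrt{f^2-d^2}}{\sqrt{2}\,d}.
\end{equation*}

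It remains to identify $f=f(s(t))$. By Theorem \ref{classminimal}, $f(s)=\frac1n\sqrt{1+n^2(s+m)^2}$, and comparing $f'$ computed directly from this with \eqref{difeqwc1} at $k=-1$ pins down the integration constant as $d=\frac1n$. Consequently $f^2-d^2=(s+m)^2$, so $\sqrt{f^2-d^2}=|s+m|$. Since the loxodrome has $s(t)=at+c$ with $a=\cos\psi$, on any interval where $at+c+m$ keeps a fixed sign we obtain
\begin{equation*}
\frac{\kappa}{\tau}=\pm\frac{n\,(at+c+m)}{\sqrt{2}}=\lambda t+\mu,
\qquad
\lambda=\pm\frac{na}{\sqrt2},\quad \mu=\pm\frac{n(c+m)}{\sqrt2},
\end{equation*}
which is the claimed affine dependence on $t$.

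The only delicate point is the algebraic simplification of the torsion formula \eqref{torw}: one must verify that at $k=-1$, $a^2=b^2=\tfrac12$ both the factor $b^2+a^2k^2$ and the degree-three polynomial in $k$ collapse to $1$, so that the factor $f^2-d^2$ cancels against $\kappa^2$ and produces the remarkably simple $\tau=\pm\varepsilon d/f^2$. Everything else — recognizing the minimal surface as the $k=-1$ CRPC case, and reading off $d=1/n$ together with $\sqrt{f^2-d^2}=|s+m|$ from the catenoid profile — is routine.
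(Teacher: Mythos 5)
Your proposal is correct, and every algebraic claim checks out: at $k=-1$, $a^2=b^2=\tfrac12$ the coefficient $a^2(a^2k^2+2kb^2-b^2)$ is indeed $-\tfrac12$, the factor $b^2+a^2k^2$ and the cubic $a^2b^2+(a^4-2a^2b^2-a^2)k+(2a^2-3a^4)k^2+a^4k^3$ both equal $1$, and the cancellation against $\kappa^2=\frac{f^2-d^2}{2f^4}$ gives $\tau=\pm\varepsilon d/f^2$; with $d=1/n$ one gets $\sqrt{f^2-d^2}=|s+m|$ and the affine conclusion. The route differs from the paper's only in the order of specialization: the paper starts from the $\psi=\frac{\pi}{4}$ formulas \eqref{kaploxeq}--\eqref{torloxeq} and then imposes minimality via $ff''=1-f'^2$, arriving at $\kappa=\frac{1}{\sqrt 2}\frac{f'}{f}$ and $\tau=\frac{\varepsilon\sqrt{1-f'^2}}{f}$, whereas you start from the CRPC Lemma \eqref{kappaw}--\eqref{torw} at $k=-1$ and then set $\psi=\frac{\pi}{4}$. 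The two outputs agree, since \eqref{difeqwc1} gives $f'=\pm\sqrt{f^2-d^2}/f$ and $\sqrt{1-f'^2}=d/f$. What your version buys is that the first integral $d$ of the profile ODE appears explicitly (so the identification $d=1/n$ is transparent) and you are more careful than the paper about the sign of $s+m$ when removing the square root; what it costs is having to verify the two polynomial collapses in \eqref{torw}, which the paper's shorter specialization avoids.
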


\begin{proof}
We assume that $R$ is a minimal surface and $\alpha $ is a loxodrome which
it cuts the meridian curves of $R$ by the angle $\psi =\frac{\pi }{4}.$ Then
by computing \eqref{kaploxeq} and \eqref{torloxeq}, we obtain the curvature and torsion of $\alpha $, respectively as follows:%
\begin{equation}
\label{kappamin}
\kappa
=\frac{1}{\sqrt{2}}\frac{f^{\prime }\left( s\right) }{f\left(
s\right)}
\end{equation}%
and 
\begin{equation}
\label{tormin}
\tau=
\frac{\varepsilon \sqrt{1-f^{\prime ^{2}}\left( s\right) }}{f\left(
s\right) }
\end{equation}%
in here $s=\frac{1}{\sqrt{2}}t+c$. 
On the other hand by using \eqref{minimal1}, \eqref{kappamin} and \eqref{tormin}, we obtain that 
\begin{equation*}
\frac{\kappa }{\tau }=\lambda t+\mu 
\end{equation*}%
where $\lambda =\frac{n}{2}$ and $\mu =\frac{(m+c)n}{\sqrt{2}}$ for 
constants $n$ and $m$. 
\end{proof}

\begin{corollary}
Let $R$ be a minimal rotational surface and 
$\alpha $ be a loxodrome that intersects
the meridian curves of $R$ at the angle $\psi =\frac{\pi }{4}$ for every points. Then, $\alpha$ is an asymptotic curve of $R.$
\end{corollary}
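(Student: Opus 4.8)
The plan is to invoke the normal-curvature formula for loxodromes already established in Proposition \ref{pro2} and combine it with the minimality hypothesis. By Proposition \ref{pro2}, the normal curvature of a loxodrome $\alpha$ cutting the meridians at the constant angle $\psi$ is
\begin{equation*}
\kappa_n = a^2\kappa_1 + b^2\kappa_2,
\end{equation*}
where $a = \cos\psi$ and $b = \varepsilon\sin\psi$, so that $a^2 = \cos^2\psi$ and $b^2 = \sin^2\psi$. The first step is simply to specialize this identity to the value $\psi = \frac{\pi}{4}$.

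For $\psi = \frac{\pi}{4}$ one has $a^2 = b^2 = \frac{1}{2}$, and the formula collapses to
\begin{equation*}
\kappa_n = \tfrac{1}{2}(\kappa_1 + \kappa_2) = H,
\end{equation*}
that is, the normal curvature along $\alpha$ equals the mean curvature of $R$ at each point of the curve. Here I use that $\alpha$ makes the angle $\frac{\pi}{4}$ with the meridian \emph{at every point}, so that the identity holds pointwise along $\alpha$ and not merely at an isolated parameter value.

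Since $R$ is a minimal rotational surface, $H = 0$ identically by definition, hence $\kappa_n = II(\dot{\alpha},\dot{\alpha}) = 0$ at every point of $\alpha$. By the notion of asymptotic direction recalled in the Preliminaries, each velocity vector $\dot{\alpha}$ is then an asymptotic direction, and therefore $\alpha$ is an asymptotic curve of $R$, as claimed.

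There is essentially no hard step in this argument: the whole proof reduces to the pointwise substitution $a^2 = b^2 = \frac{1}{2}$ into the Euler-type formula of Proposition \ref{pro2}, followed by $H = 0$. The only point that requires a moment's care is that the conclusion must hold at \emph{every} point of $\alpha$; this is guaranteed precisely because the angle $\psi = \frac{\pi}{4}$ is constant along the loxodrome, which is what upgrades the statement ``$\kappa_n$ vanishes at each point'' to ``$\alpha$ is an asymptotic curve.''
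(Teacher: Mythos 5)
Your proposal is correct and follows exactly the same route as the paper: both specialize the Euler-type formula $\kappa_n = a^2\kappa_1 + b^2\kappa_2$ of Proposition \ref{pro2} to $\psi = \frac{\pi}{4}$, observe that $\kappa_n = \frac{1}{2}(\kappa_1+\kappa_2) = H$, and conclude from minimality that $\kappa_n = 0$ along $\alpha$. Your write-up is in fact slightly more explicit than the paper's about why the identity holds pointwise along the curve, but the argument is the same.
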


\begin{proof}
We suppose that the angle which is between the loxodrome and the meridian
curves is $\psi =\frac{\pi }{4}$ for every points and $R$ is a minimal
surface then from \eqref{norcurv}, we get $\kappa _{n}=0$ and it implies that $\alpha $
is an asymptotic curve of $R.$
\end{proof}

Now, we give an example for a loxodrome on a minimal rotational surface in $\mathbb{E}^3$.
\begin{example}

If we take $n=1$, $\psi=\frac{\pi}{4}$ and $\varepsilon=1$ in \eqref{minimal}. Then, the parametrization of loxodrome (asymptotic curve) is
\[\alpha(t)=\left(\sqrt{\frac{t^2}{2}+1}\cos\left(\arcsinh{\left(\frac{\sqrt{2}}{2}t\right)}\right), \sqrt{\frac{t^2}{2}+1}\sin\left(\arcsinh{\left(\frac{\sqrt{2}}{2}t\right)}\right), \arcsinh{\left(\frac{\sqrt{2}}{2}t\right)}\right).\]

The graphs of minimal rotational surface (catenoid) $R$, loxodrome (asymptotic curve) and meridian ($\theta=\pi/4$) can be drawn by using Mathematica as follows:

\begin{figure}[htbp]
\includegraphics[height=70mm]{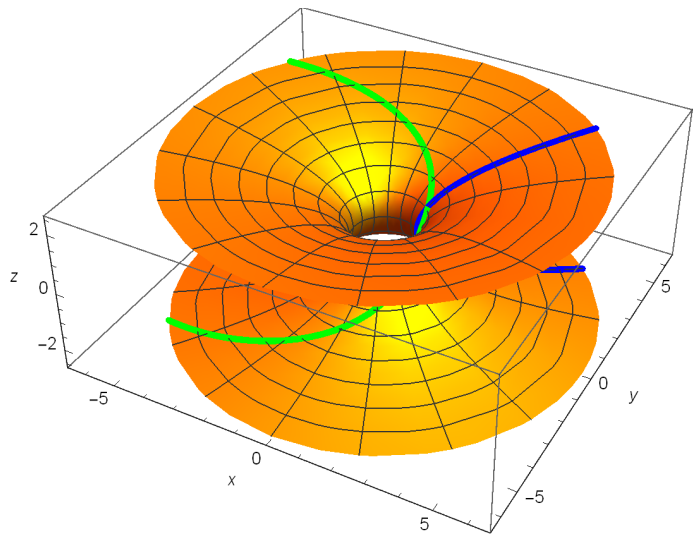}
\caption {Minimal rotational surface (catenoid), loxodrome (asymptotic curve) (green), meridian (blue).}
\label{fig4}
\end{figure}
\end{example}

\section{Conclusion}

In this paper, first we give the curvatures and torsions of loxodromes on rotational surfaces in Euclidean 3-space. Moreover, we obtain the parametrizations of loxodromes on some special rotational surfaces such as flat, constant Gaussian curvature, Weingarten and minimal. Also we give some important relations between loxodrome, helix and asymptotic curve. Finally, we introduce some examples of loxodromes on special rotational surfaces by using Wolfram Mathematica.

In the future, similarly we will study on the curvatures of space-like and time-like loxodromes on rotational surfaces respectively in Minkowski 3--space.


\begin{thebibliography}{99}

\bibitem{Alexander} Alexander J. Loxodromes: A Rhumb Way to Go. Mathematics Magazine 2004; 77 (5): 349--356.

\bibitem{Babaarslan} Babaarslan M, Yayli Y. Differential equation of the loxodrome on a helicoidal surface. Journal of Navigation 2015; 68 (5): 962--970.

\bibitem{Deshmukha} Deshmukha S, Alghanemib A, Farouki R. T. Space Curves Defined by Curvature–Torsion Relations and Associated Helices. Filomat 2019; 33 (15): 4951--4966.

\bibitem{Kendall} Kendall C. G. A study of the loxodrome on a general surface of revolution with special application to the conical spiral. Master dissertation,  University of California, 1920.

\bibitem{Khalifa} Khalifa Saad M, Abdel-Baky R. A, Alharbi F, Aloufi A. On minimal surfaces with the same asymptotic curve in Euclidean space. Applied Mathematical Sciences 2019; 13 (21): 1021--1031.

\bibitem{Kos} Kos S, Filjar R, Hess M. Differential Equation of the Loxodrome on a Rotational Surface. In: Proceedings of the 2009 International Technical Meeting of The Institute of Navigation, Anaheim, CA, 2009.

\bibitem{Kuhnel} K\"{u}hnel W. Differential Geometry: Curves--Surfaces--Manifolds (second ed.). USA: American Mathematical Society, 2006.

\bibitem{Leitao} Leit\~{a}o H, Gaspar J. A. Globes, Rhumb Tables, and the Pre-History of the Mercator Projection. Imago Mundi 2014; 66 (2): 180--195.

\bibitem{Noble} Noble C. A. Note on loxodromes.  Bulletin of the American Mathematical Society 1905; 12 (3): 116--119.

\bibitem{O'Neill} O'Neill B. Elementary Differential Geometry (second ed.). USA: Academic Press, 1997.

\bibitem{Petrovic} Petrovic M. Differential Equation of a Loxodrome on the Spheroid. Nase More 2007; 54 (3-4): 87--89.

\bibitem{Perez} P\'erez J. A New Golden Age of Minimal Surfaces. Notices of the AMS 2017; 64 (4): 347--358.

\bibitem{Pressley} Pressley A. Elementary Differential Geometry. Springer, 2001.

\bibitem{Xu} Xu G, Wang G. Z. Quintic parametric polynomial minimal surfaces and their properties. Differential Geometry and its Applications 2010; 28 (6): 697--704.

\bibitem{Wang} Wang H, Pottmann H. Characteristic parameterizations of surfaces with a constant
ratio of principal curvatures. Computer Aided Geometric Design 2022; 93: 102074.



\end{thebibliography}
\end{document}